\renewenvironment{proof}[1][\proofname]{{\bfseries #1.} }{\qed}
\def\Cov{{\rm Cov\,}}
\newcommand{\field}[1]{\mathbb{#1}}
\newcommand{\R}{\field{R}}
\newcommand{\N}{\field{N}}
\newcommand{\Var}{{\rm Var}}
\newcommand{\F}{{\mathscr{F}}}
\newcommand{\goto}{{\longrightarrow}}
\def\authors#1{{ \begin{center} #1 \vspace{0pt} \end{center} } \smallskip}
\def\institution#1{{\sl \begin{center} #1 \vspace{0pt} \end{center} } }
\def\title#1{{\huge\bf  \begin{center} #1 \vspace{0pt} \end{center}  } \smallskip}
\def\E{{\mathbb{ E}}}
\def\P{{\mathbb{P}}}
\def\F{{\mathscr{F}}}
\def\paref#1{(\ref{#1})}
\newtheorem{theorem}{Theorem}[section]
\newtheorem{proposition}[theorem]{Proposition}
\newtheorem{lemma}[theorem]{Lemma}
\begin{document}

\title{\sc{The Defect of \\Random Hyperspherical Harmonics}}
\date{Sep 28, 2015}
\authors{Maurizia Rossi
}
\institution{
MAP5-UMR CNRS 8145, Universit\'e Paris Descartes, France \\
E-mail address: \texttt{maurizia.rossi@parisdescartes.fr}
}

\begin{abstract} 
Random hyperspherical harmonics are Gaussian Laplace eigenfunctions on the unit $d$-sphere ($d\ge 2$). We investigate the distribution of their defect i.e., the difference between the measure of positive and negative regions.  
Marinucci and Wigman studied the two-dimensional case giving the asymptotic variance \cite{def} and a Central Limit Theorem \cite{Nonlin}, both in the high-energy limit.

Our main results concern asymptotics for the defect variance and  quantitative CLTs in Wasserstein distance, in any dimension. 
The proofs are based on Wiener-It\^o chaos expansions for the defect, a careful use of asymptotic results for all order moments of Gegenbauer polynomials and Stein-Malliavin approximation techniques by Nourdin and Peccati \cite{nou-pe2, noupebook}. Our argument requires some novel technical results of independent interest that involve integrals of the product of three hyperspherical harmonics.

\smallskip

\noindent\textsc{Keywords and Phrases: Defect, Gaussian Eigenfunctions, High-Energy
Asymptotics, Quantitative Central Limit Theorem, Integrals of Hyperspherical Harmonics}

\smallskip

\noindent \textsc{AMS Classification: 60G60; 42C10, 60D05, 60B10, 43A75}
\end{abstract}

\section{Introduction}\label{intro}

Let $f:\mathcal M \to \mathbb R$ be any real-valued function defined on some compact Riemannian manifold $(\mathcal M,g)$ and let $\mu_g$ denote the induced measure on $\mathcal M$. 
The defect $D(f)$ of $f$ is the difference between the measure of ``hot'' and ``cold'' regions:
\begin{equation*}
D(f) := \mu_g(f^{-1}(0,+\infty)) - \mu_g(f^{-1}(-\infty,0)).
\end{equation*}
We can hence write
\begin{equation}\label{def}
D(f)=\int_{\mathcal M} \mathcal H(f(x))\,d\mu_g(x),
\end{equation}
where $\mathcal H$ denotes the Heaviside function $\mathcal H(t) := 1_{(0,+\infty)}(t) - 1_{(-\infty,0)}(t)$, $t\in \mathbb R$.

An important case is where $f$ is a Laplacian eigenfunction. We recall that a function $f$ is called a Laplacian eigenfunction if it is a non-trivial solution of the Schr\"odinger equation 
\begin{equation}\label{eqSch}
\Delta_{g} f + E f =0,
\end{equation}
where $\Delta_g$ stands for the Laplace-Beltrami operator on $(\mathcal M,g)$ and  $E> 0$.  It is well-known that the (purely discrete) spectrum of $-\Delta_g$ consists of a non-decreasing sequence of positive eigenvalues whose corresponding sequence of eigenfunctions forms a complete orthonormal basis for $L^2(\mathcal M)$, the space of square integrale functions on the manifold. Observe that we allow multiple eigenvalues i.e., spectral degeneracies. 

An increasing amount of mathematics research has focused on the geometry of the nodal set   $f^{-1}(0)$ (see e.g. \cite{bruning, bruning-gromes, D-F, yau}) and 
its complement $\mathcal M\setminus f^{-1}(0)$ (see e.g. \cite{GRS,JZ2}), associated with Laplacian eigenfunctions $f$. Note that nodal sets are customarly called ``nodal lines" in the two-dimensional case -- being, generically, smooth curves -- and also that the connected components of $\mathcal M\setminus f^{-1}(0)$ are often referred to as ``nodal domains". 
The defect \paref{def} is one of the most natural  functionals \cite{def} associated with the geometry of the latter. 

Recently, a growing interest has been attracted by \emph{random} eigenfunctions on manifolds (see also \cite{meckes}) - especially on the two-dimensional sphere and the standard flat torus (e.g. \cite{BMW17, AmP,  misto, MRW17, def, Nonlin, nazarovsodin, RW, Wig}). In the latter references, the space of eigenfunctions is endowed with some probability measure and the geometry of their (random) nodal sets and domains is studied (in the high-energy limit, i.e. when the magnitude of the eigenvalues diverges to infinity). 
See \S \ref{det vs ran} and \cite{wigsurvey} for further discussions. 
In this paper we study the high-energy behavior of the defect of random Laplacian eigenfunctions on hyperspheres. 

\smallskip

\emph{\large Some conventions.} In this manuscript, given two sequences $a_n, b_n$ of positive numbers, we will write 
$a_n\sim b_n$ if $\lim_{n\to +\infty} a_n/b_n=1$, whereas $a_n = O(b_n)$ or equivalently $a_n\ll b_n$ (resp. $a_n=o(b_n)$) if $a_n/b_n$ is asymptotically bounded (resp. $a_n/b_n \to 0$). Finally, $a_n\asymp b_n$ will mean that  $a_n/b_n\to c$, for some $c>0$. 
Every random object will be defined on the same probability space $(\Omega, \F, \P)$, $\E$ shall denote the expectation under the measure $\P$ and, as usual, $\mathop{\to}^{\mathcal L}$  convergence in distribution whereas $\mathop{=}^{\mathcal L}$ equality in law. 

\smallskip


\subsection{Random Hyperspherical Harmonics}\label{introH}

We deal with the case $\mathcal M = \mathbb S^d\hookrightarrow \mathbb R^{d+1}$, the unit $d$-dimensional sphere with the natural metric ($d\ge 2$). The induced measure is the Lebeasgue measure $dx$. The eigenvalues of the Laplace-Beltrami  operator on $\mathbb S^d$ (which will be denoted by $\Delta_d$ from now on),  are of the form $-\ell(\ell+d-1)$, for $\ell\in \mathbb N$, and the dimension $n_{\ell;d}$ of the $\ell$-th eigenspace is $$n_{\ell;d}=\frac{2\ell +d-1}{\ell}{\ell+d-2 \choose \ell -1}\sim \frac{2}{(d-1)!}\ell^{d-1},\qquad \text{as } \ell\to +\infty.$$ 
An orthonormal basis for the $\ell$-th eigenspace is given by the family of (real-valued) hyperspherical harmonics $(Y_{\ell,m;d})_{m=1}^{n_{\ell;d}}$ (see e.g. \cite[\S 9.3]{vilenkin2})
$$
\Delta_d Y_{\ell,m;d} +\ell(\ell+d-1) Y_{\ell,m;d} =0. 
$$

We now endow the $\ell$-th eigenspace with a Gaussian measure, i.e. we consider  the  $\ell$-th (real-valued) random eigenfunction $T_\ell:=T_{\ell;d}$ on $\mathbb S^d$ to be defined as 
\begin{equation}\label{rf}
T_\ell(x) := \sum_{m=1}^{n_{\ell;d}} a_{\ell,m;d} Y_{\ell,m;d}(x),\qquad x\in \mathbb S^d,
\end{equation}
where $\left (a_{\ell,m;d}\right )_{m=1}^{n_{\ell;d}}$ are i.i.d. centered Gaussian random variables with variance given by
\begin{equation}\label{var_aelle}
\Var(a_{\ell,m;d}) = \frac{|\mathbb S^d|}{n_{\ell;d}},
\end{equation}
$|\mathbb S^d|$ denoting the (Lebeasgue) measure of the hyperspherical surface. Equivalently, we can define $T_\ell$ as the isotropic centered Gaussian random field on $\mathbb S^d$ whose covariance kernel is 
\begin{equation}\label{cov}
\Cov(T_\ell(x), T_\ell(y)) = G_{\ell;d}(\cos d(x,y)),\quad x,y\in \mathbb S^d,
\end{equation}
where $G_{\ell;d}$ stands for the normalized $\ell$-th Gegenbauer polynomial \cite[\S 4.7]{szego} and $d(x,y)$ denotes the (spherical) geodesic distance between $x$ and $y$.  

To be more precise, $G_{\ell;d}=\alpha_\ell^{-1} P_\ell^{(d/2-1,d/2-1)}$, where $\left( P_\ell^{(a,b)} \right)_\ell$ denotes the family of Jacobi polynomials\footnote{Recall that $\left( P_\ell^{(a,b)} \right)_\ell$ is a family of orthogonal polynomials on the interval $[-1,1]$ with respect to the weight $
(1-t)^{a}(1+t)^b
$.  } \cite[Chapter 4]{szego} and $\alpha_\ell={\ell + d/2 -1 \choose \ell}$ is a normalizing factor. It turns hence out that $T_\ell(x)$ has unit variance for every $x\in \mathbb S^d$. 

This model was studied  in \cite{maudom} and, in the particular case $d=2$ in  \cite{BMW17, eulvale,fluct,dogiocam,MRW17,def,Nonlin,nazarovsodin,Wig} e.g.
Note that when $d=2$, \paref{var_aelle} is
$
\Var(a_{\ell,m;2}) = \frac{4\pi}{2\ell+1}
$
and $G_{\ell;2} \equiv P_\ell$ the $\ell$-th Legendre polynomial \cite[\S 4.7]{szego}. 

It is readily checked that the addition formula \cite[\S 9.6]{andrews} for hyperspherical harmonics 
\begin{equation}\label{addformula}
\frac{|\mathbb S^d|}{n_{\ell;d}} \sum_{m=1}^{n_{\ell;d}}Y_{\ell,m;d}(x) Y_{\ell,m;d}(y) = G_{\ell;d}(\cos d(x,y)),\qquad x,y\in \mathbb S^d
\end{equation}
ensures that the random field $T_\ell$ as defined in \paref{rf} has covariance kernel given by \paref{cov}. 

The defect $D_\ell:=D(T_\ell)$  in \paref{def} of $T_\ell$ is then a random variable defined as
\begin{equation}\label{defect}
D_\ell = \int_{\mathbb S^d} \mathcal H(T_\ell(x))\,dx.
\end{equation}
We are interested in the asymptotic behavior of the sequence $(D_\ell)_\ell$ in the high-energy limit, i.e. as $\ell\to +\infty$. We anticipate here that $D_\ell$ vanishes for odd $\ell$, therefore we will study the defect only for even integers $\ell$ (we will prove it in \S \ref{on the var}). In particular,  for $\ell\to +\infty$ we shall mean: \emph{as $\ell\to +\infty$ along even integers}. 

\subsection{Previous work}\label{related}

The case $d=2$ has been investigated by Marinucci and Wigman. In \cite{def}, they prove that $D_\ell$ is centered and give an asymptotic result for the variance, i.e. as $\ell \to +\infty$ 
\begin{equation}\label{var2}
\Var(D_\ell) = \frac{C}{\ell^2}(1 + o(1)),\qquad C> \frac{32}{\sqrt{27}}.
\end{equation}
 In \cite{Nonlin}, a Central Limit Theorem is shown for the defect on the $2$-sphere: as $\ell\to +\infty$
\begin{equation}\label{clt2}
\frac{D_\ell}{\sqrt{\Var(D_\ell)}}\mathop{\to}^{\mathcal L} Z,
\end{equation}
where $Z\sim \mathcal N(0,1)$ is a standard Gaussian random variable.

Observe now that a simple transformation gives
\begin{equation*}
D_\ell = 2\int_{\mathbb S^d} 1_{(0,+\infty)}(T_\ell(x))\,dx - |\mathbb S^d|,
\end{equation*}
where $\int_{\mathbb S^d} 1_{(0,+\infty)}(T_\ell(x))\,dx=:S_\ell(0)$ is the measure of the so-called $0$-excursion set $\lbrace x\in \mathbb S^d : T_\ell(x) >0\rbrace$. The general case of $z$-excursion set for $z\in \mathbb R$, on the $d$-sphere ($d\ge 2)$ has been studied  in \cite{maudom}.  In the latter reference,  quantitative CLTs in the Wasserstein distance for the measure $S_\ell(z):=\int_{\mathbb S^d} 1_{(z,+\infty)}(T_\ell(x))\,dx$ of $z$-excursion sets $\lbrace x\in \mathbb S^d : T_\ell(x) >z\rbrace$ are given (see below \paref{varmaudom} and \paref{wassmaudom}), \emph{except for the nodal case $z=0$}. Recall that Wasserstein distance (e.g. \cite[\S C.2]{noupebook}) is the probability metric between two random variables $N,Z$ defined as 
\begin{equation}\label{wass}
\text{d}_W\left ( N, Z   \right) := \sup_{h\in \text{Lip}_1} |\E[h(N)]-\E[h(Z)]|,
\end{equation}
where $\text{Lip}_1$ denotes the set of Lipschitz functions whose Lipschtiz constant equals $1$. 

From \cite{maudom} for $z\ne 0$, we have that 
\begin{equation}\label{varmaudom}
\Var(S_\ell(z))\sim  |\mathbb S^d|^2(d-1)!\,\frac{z^2\phi(z)^2}{4}\times \frac{1}{\ell^{d-1}},\qquad \text{as } \ell\to +\infty,
\end{equation}
 $\phi$ (resp. $\Phi$) denoting the standard Gaussian density (resp. distribution function),
and moreover 
\begin{equation}\label{wassmaudom}
\text{d}_W\left(\frac{S_\ell(z)-|\mathbb S^d|(1-\Phi(z))}{\sqrt{\Var(S_\ell(z))}}, Z \right) = O\left( \frac{1}{\sqrt{\ell}}\right),
\end{equation}
where $Z\sim \mathcal N(0,1)$ as before. In particular, \paref{wassmaudom} implies a CLT for the measure of excursion sets at any non-zero level.  

\subsection{Main results}\label{mainsec}

In this paper we study the high-energy behavior of the sequence of random variables $(D_\ell)_\ell$ \paref{defect} in any dimension $d\ge 2$. Evaluating the mean of $D_\ell$ is trivial. Indeed, exchanging the expectation with integration over $\mathbb S^d$ we get
$$
\E[D_\ell]= \int_{\mathbb S^d} \E[\mathcal H(T_\ell(x))]\,dx,
$$
and since for every $x\in \mathbb S^d$, $\E[\mathcal H(T_\ell(x))]=0$ by the symmetry of the Gaussian distribution, we have just proved the following. 
\begin{lemma}\label{lem media}
For every $\ell \in \mathbb N$
$$
\E[D_\ell]=0.
$$
\end{lemma}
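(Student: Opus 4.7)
The plan is straightforward, mirroring the argument already sketched in the excerpt. First, I would apply Fubini's theorem to exchange expectation and integration:
\begin{equation*}
\E[D_\ell] = \E\!\left[\int_{\mathbb{S}^d} \mathcal{H}(T_\ell(x))\, dx\right] = \int_{\mathbb{S}^d} \E[\mathcal{H}(T_\ell(x))]\, dx.
\end{equation*}
The exchange is justified because $|\mathcal H(t)|\le 1$ and $\mathbb S^d$ has finite measure, so the integrand is dominated by an integrable constant.

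Next, I would use the key observation that for every fixed $x\in\mathbb{S}^d$, the random variable $T_\ell(x)$ is a centered Gaussian with unit variance (as noted after \paref{cov}), hence its law is symmetric about the origin. Combined with the fact that the Heaviside function $\mathcal{H}(t)=1_{(0,+\infty)}(t)-1_{(-\infty,0)}(t)$ is odd almost everywhere (the point $t=0$ has probability zero under a non-degenerate Gaussian), we get
\begin{equation*}
\E[\mathcal{H}(T_\ell(x))] = \P(T_\ell(x)>0) - \P(T_\ell(x)<0) = \tfrac12 - \tfrac12 = 0.
\end{equation*}

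Substituting this pointwise vanishing into the integral yields $\E[D_\ell]=0$, completing the proof. There is really no obstacle here; the only care needed is to check that the hypotheses of Fubini hold and that $T_\ell(x)$ is non-degenerate at every $x$, both of which follow immediately from the boundedness of $\mathcal H$ and from $\Var(T_\ell(x))=1$.
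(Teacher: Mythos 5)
Your proposal is correct and follows exactly the paper's argument: exchange expectation and integration over $\mathbb{S}^d$, then use the symmetry of the centered Gaussian $T_\ell(x)$ to conclude $\E[\mathcal H(T_\ell(x))]=0$ pointwise. The extra care you take in justifying Fubini and handling the null event $\{T_\ell(x)=0\}$ is welcome but adds nothing beyond the paper's (deliberately brief) proof.
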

For the variance, we have the following asymptotic result which generalizes \paref{var2} to the higher dimensional sphere and whose proof is given in \S \ref{proofvar}.
\begin{proposition}\label{prop var}
As $\ell\to +\infty$, the defect variance $\Var(D_\ell)$ satisfies
\begin{equation}\label{vareq}
\Var(D_\ell) = \frac{C_d}{\ell^d}(1 +o(1)),
\end{equation}
where $C_d > 0$ is a positive constant depending only on $d$.  
\end{proposition}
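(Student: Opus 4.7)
The plan is to chaos-expand $D_\ell$, reduce $\Var(D_\ell)$ to a series of Gegenbauer integrals, and extract the leading $\ell^{-d}$ contribution from the near-diagonal region of $\mathbb{S}^d\times\mathbb{S}^d$.

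Since $\mathcal{H}$ is odd, only odd-indexed Hermite polynomials appear in its $L^2(\phi)$ expansion: $\mathcal{H}(t)=\sum_{k\ge 0}\frac{J_{2k+1}}{(2k+1)!}H_{2k+1}(t)$ with $J_q:=\E[\mathcal{H}(Z)H_q(Z)]$, $Z\sim\mathcal{N}(0,1)$, and Parseval $\sum_k J_{2k+1}^2/(2k+1)!=\E[\mathcal{H}(Z)^2]=1$. Substituting into \paref{defect}, using orthogonality of Wiener chaoses together with $\E[H_q(T_\ell(x))H_q(T_\ell(y))]=q!\,G_{\ell;d}(\cos d(x,y))^q$, and reducing the double integral by isotropy yields
\begin{equation*}
\Var(D_\ell)=|\mathbb{S}^d|\,|\mathbb{S}^{d-1}|\sum_{k\ge 0}\frac{J_{2k+1}^2}{(2k+1)!}\,I_{2k+1}(\ell),\qquad I_q(\ell):=\int_0^\pi G_{\ell;d}(\cos\theta)^q(\sin\theta)^{d-1}\,d\theta.
\end{equation*}
Orthogonality of $G_{\ell;d}$ against $(\sin\theta)^{d-1}$ forces $I_1(\ell)=0$, while the parity $G_{\ell;d}(-t)=(-1)^\ell G_{\ell;d}(t)$ combined with $\theta\mapsto\pi-\theta$ kills every $I_{2k+1}(\ell)$ for odd $\ell$, yielding $D_\ell=0$ a.s.\ in that case and justifying the restriction to even $\ell$ stated in \S\ref{introH}.

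The technical heart is the pointwise asymptotic $\ell^d I_{2k+1}(\ell)\to \widetilde c_{k,d}$ for each $k\ge 1$, plus a uniform-in-$k$ dominating bound that justifies termwise limiting in the chaos series. To establish the pointwise asymptotic I would split $[0,\pi]$ into endpoint windows $[0,\omega/\ell]\cup[\pi-\omega/\ell,\pi]$ and a bulk $[\omega/\ell,\pi-\omega/\ell]$, with $\omega$ sent to infinity last. On the endpoints, substituting $\theta=s/\ell$ and using the Mehler-Heine-type scaling $G_{\ell;d}(\cos(s/\ell))\to\Psi_d(s):=2^{(d-2)/2}\Gamma(d/2)\,s^{-(d-2)/2}J_{(d-2)/2}(s)$ (a universal Bessel profile with $\Psi_d(0)=1$) produces the contribution $2\ell^{-d}\int_0^{\omega}\Psi_d(s)^{2k+1}s^{d-1}\,ds\cdot(1+o(1))$, the factor $2$ coming from the two endpoints being equal for even $\ell$. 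On the bulk, the Hilb--Szeg\H{o} asymptotic $G_{\ell;d}(\cos\theta)\sim c_d\ell^{-(d-1)/2}(\sin\theta)^{-(d-1)/2}\cos(\ell\theta+\varphi_d)$ yields an oscillatory integral of odd cosine power, non-stationary, giving contribution $o(\ell^{-d})$ after integration by parts. Letting $\omega\to\infty$ identifies $\widetilde c_{k,d}=2\int_0^\infty\Psi_d(s)^{2k+1}s^{d-1}\,ds$, absolutely convergent whenever $2k+1>\tfrac{2d}{d-1}$ thanks to the Bessel decay $J_\nu(s)=O(s^{-1/2})$.

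Assembling these gives $\Var(D_\ell)=C_d\ell^{-d}(1+o(1))$ with $C_d=|\mathbb{S}^d|\,|\mathbb{S}^{d-1}|\sum_{k\ge 1}\frac{J_{2k+1}^2}{(2k+1)!}\widetilde c_{k,d}>0$, strict positivity following from the positivity of at least one $\widetilde c_{k,d}$ (each arising naturally as the variance of an $L^2$ limiting chaotic object). The main obstacle is the low-order contribution $2k+1=3$ in low dimensions $d\in\{2,3\}$, where $\widetilde c_{1,d}$ is only conditionally convergent and the crude oscillatory bulk analysis above is insufficient; this is exactly where the paper's announced technical results on the triple integrals $\int_{\mathbb{S}^d}Y_{\ell,m_1;d}Y_{\ell,m_2;d}Y_{\ell,m_3;d}\,dx$ (generalized Clebsch--Gordan coefficients) intervene, giving a direct algebraic evaluation of $I_3(\ell)$ whose asymptotics can be controlled rigorously. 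A secondary technical point is the uniform-in-$k$ bound on $\ell^d I_{2k+1}(\ell)$ required to commute limit and sum, which one expects to obtain by combining $|G_{\ell;d}|\le 1$ on the endpoints with the oscillatory bulk analysis and the summability of $J_{2k+1}^2/(2k+1)!$ via Parseval.
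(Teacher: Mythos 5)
Your architecture is the same as the paper's: chaos-expand $\mathcal H$, reduce $\Var(D_\ell)$ to $|\mathbb S^d||\mathbb S^{d-1}|\sum_q \frac{J_{2q+1}^2}{(2q+1)!}\int_0^\pi G_{\ell;d}(\cos\theta)^{2q+1}(\sin\theta)^{d-1}d\theta$, take termwise limits, and control the tail. The paper does not reprove the pointwise asymptotics $\ell^d I_{2q+1}(\ell)\to c_{2q+1;d}$: it imports them wholesale from \cite[Proposition 1.1]{maudom}, including the delicate conditionally convergent case $2q+1=3$ in low dimensions. Its tail bound is also cleaner than what you sketch: for $q\ge 2$ it uses $|G_{\ell;d}|^{2q+1}\le |G_{\ell;d}|^{5}$ pointwise, so the whole tail is dominated by the absolute fifth moment (which \emph{is} $O(\ell^{-d})$ uniformly, unlike the absolute third moment in $d=2,3$), times $\sum_{q>m}J_{2q+1}^2/(2q+1)!\ll m^{-1/2}$. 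Your proposed route to the tail bound would need exactly this kind of power-raising to work in $d=2$.

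Two genuine gaps remain in your write-up. First, strict positivity of $C_d$. Your parenthetical only shows each $\widetilde c_{k,d}\ge 0$ (as a limit of variances), which gives $C_d\ge 0$; the proposition needs $C_d>0$, and positivity of a generic $c_{2q+1;d}$ is in fact only conjectural (see \S\ref{on the var}). The paper settles it by the explicit classical evaluation of $c_{3;d}=\int_0^\infty \widetilde J_d(\psi)^3\psi^{d-1}d\psi>0$ (a Weber--Schafheitlin-type integral of three Bessel functions, \cite[p.~217]{andrews}); this is precisely what produces the lower bound \paref{disug}. You must invoke something of this sort. Second, you correctly flag that the $k=1$ term is the hard analytic point in $d\in\{2,3\}$, but your proposed rescue --- an ``algebraic evaluation of $I_3(\ell)$'' via the triple integrals ${\mathcal G}^{\ell,m_3}_{\ell,m_1,\ell,m_2;d}$ --- misattributes the role of Lemma \ref{lemCG}: in this paper the Clebsch--Gordan-type identity is used only for the \emph{fourth cumulant} of the third chaotic component (proof of Lemma \ref{lem}), not for the variance asymptotics, and no explicit closed form for these coefficients is available for $d\ge 3$. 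The asymptotics of $I_3(\ell)$, conditional convergence and all, come from the careful Bessel/stationary-phase analysis of \cite[Proposition 1.1]{maudom}; as sketched, your non-stationary-phase bulk estimate does not yield $o(\ell^{-d})$ there, so this step is not actually closed by your argument.
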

Note that $C_2=C$ in \paref{var2}. Comparing  \paref{vareq} with \paref{varmaudom}, one infers that the variance of the measure of $z$-excursion sets has a smaller order of magnitude in the nodal case than for $z\ne 0$. This phenomenon appears in many situations and it is usually referred to as Berry's cancellation phenomenon \cite{AmP,Wig}. See \S\ref{det vs ran}. 

The constant $C_d$ in Proposition \ref{prop var} may be expressed in terms of the improper (conditionally convergent) integral
\begin{equation}\label{intC}
C_d=\frac{4}{\pi}|\mathbb S^d| |\mathbb S^{d-1}|\int_{0}^{+\infty}
\psi^{d-1}\left( \arcsin \left (\widetilde J_{d}(\psi)\right)              
-  \widetilde J_{d}(\psi)           \right)\,d\psi,
\end{equation}
where
\begin{equation}\label{tildej}
\widetilde J_{d}(\psi):=2^{d/2 - 1}\left (d/2-1 \right)!\,
J_{d/2-1}(\psi )\psi ^{-\left( d/2 
-1\right)},\qquad \psi>0
\end{equation}
and 
$J_{d/2-1}$ denotes the Bessel function \cite[\S 1.7]{szego} of order $d/2-1$. See \paref{guess1} for a formula equivalent to \paref{intC} that expresses $C_d$ as a convergent series. 
We do not know whether one can evaluate $C_d$
explicitly; however, we shall
show that
\begin{equation}\label{disug}
C_d>\frac{2}{3\pi}|\mathbb S^d| |\mathbb S^{d-1}| \left(2^{d/2 - 1}\left (d/2-1 \right)!\right)^3
\frac{3^{d/2 -3/2}}{2^{3\left (d/2-1 \right)-1}\sqrt \pi\,
\Gamma \left ( d/2 -1/2  \right )},
\end{equation}
$\Gamma$ denoting the Gamma function \cite[\S 1.7]{szego}. For instance, for $d=2$, \paref{disug} gives $C_2 > 32/\sqrt{27}$, that coincides with \paref{var2}.

The main contribution of the present paper is the following  quantitative CLT  in the Wasserstein distance \paref{wass} which extends and generalizes the results from \cite{maudom, Nonlin} collected in \S \ref{related}.
We cover the nodal case which is more interesting (see \S\ref{det vs ran}) than the non-zero level case treated in \cite{maudom}. Moreover, there are marked differences compared to the latter (see \S\ref{on the proof}); indeed, the techniques developed in \cite{maudom} are \emph{not} enough to deal with the defect, and we have to overcome some additional difficulties. Finally, our result is stronger than \paref{clt2} (proven in \cite{Nonlin}), yielding also the rate of convergence to the Gaussian distribution. 
\begin{theorem}\label{th qclt}
Let $Z$ be a standard Gaussian random variable. For $d\ge 2$ we have, as $\ell\to +\infty$,
$$
\text{d}_{W}\left (\frac{D_\ell}{\sqrt{\Var(D_\ell)}}, Z \right ) = O\left( \frac{1}{^4\sqrt{\log \ell}}\right),
$$
in particular
$$
\frac{D_\ell}{\sqrt{\Var(D_\ell)}}\mathop{\goto}^{\mathcal L} Z.
$$
\end{theorem}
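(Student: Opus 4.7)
The plan is to combine the Wiener-Itô chaos decomposition of $D_\ell$ with the multivariate Stein-Malliavin methodology of Nourdin-Peccati~\cite{noupebook,nou-pe2}, and to balance a truncation error against a chaos-wise quantitative CLT. First, expand the Heaviside function in Hermite polynomials, $\mathcal H(t) = \sum_{q\,\text{odd}} (J_q/q!)\, H_q(t)$ with $J_q := \E[\mathcal H(Z) H_q(Z)]$ for $Z\sim\mathcal N(0,1)$; substituting into \paref{defect} and passing to the $L^2$ limit yields
$$D_\ell = \sum_{q\,\text{odd}} \frac{J_q}{q!}\, h_\ell[q], \qquad h_\ell[q] := \int_{\mathbb S^d} H_q(T_\ell(x))\,dx,$$
with each $h_\ell[q]$ lying in the $q$-th Wiener chaos generated by $T_\ell$. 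The $q=1$ contribution vanishes because the $Y_{\ell,m;d}$ are orthogonal to constants for $\ell\ge 1$, and the even-$q$ terms vanish by oddness of $\mathcal H$. Using $J_{2k+1} = 2\phi(0) H_{2k}(0)$ together with Stirling, one checks $J_q^2/q! \asymp q^{-3/2}$ along odd $q$; chaos orthogonality then gives $\Var(D_\ell) = \sum_{q\,\text{odd},\,q\ge 3} (J_q^2/q!)\, c_{\ell,q}$, where $c_{\ell,q} := \int_{\mathbb S^d\times\mathbb S^d} G_{\ell;d}(\cos d(x,y))^q\,dx\,dy$.

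Next, define the truncated defect $D_\ell^{\le Q} := \sum_{3\le q\le Q,\, q\,\text{odd}} (J_q/q!)\, h_\ell[q]$. The moment asymptotics for $G_{\ell;d}$ that underlie Proposition~\ref{prop var} give $c_{\ell,q} \asymp \ell^{-d}$ uniformly for each odd $q\ge 3$, so
$$\|D_\ell - D_\ell^{\le Q}\|_2^2 \;=\; \sum_{q>Q}\frac{J_q^2}{q!}\,c_{\ell,q} \;\lesssim\; \ell^{-d}\sum_{q>Q} q^{-3/2} \;\lesssim\; \ell^{-d}\, Q^{-1/2}.$$
Combined with $\Var(D_\ell)\asymp \ell^{-d}$ (Proposition~\ref{prop var}) and the elementary inequality $\text{d}_W\le\|\cdot\|_2$, this shows that the passage from $D_\ell/\sqrt{\Var(D_\ell)}$ to $D_\ell^{\le Q}/\sqrt{\Var(D_\ell^{\le Q})}$, renormalization included, costs at most $O(Q^{-1/4})$ in Wasserstein distance.

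The main work is then the quantitative CLT for $D_\ell^{\le Q}$, a finite sum of elements of distinct Wiener chaoses. I would apply the multivariate Nourdin-Peccati bound for sums of chaoses to control $\text{d}_W(D_\ell^{\le Q}/\sqrt{\Var(D_\ell^{\le Q})}, Z)$ by the $L^2$-norms of contractions of the symmetric kernels of the $h_\ell[q]$. Through the addition formula \paref{addformula}, these contraction norms reduce to integrals on products $(\mathbb S^d)^k$ of powers of $G_{\ell;d}$ composed with geodesic distances, and ultimately to multilinear expressions in the integrals $\int_{\mathbb S^d} Y_{\ell,m_1;d}Y_{\ell,m_2;d}Y_{\ell,m_3;d}\,dx$ of products of three hyperspherical harmonics --- precisely the new technical objects announced in the abstract. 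Together with the moment decay of $G_{\ell;d}$, the resulting $d$-dimensional Clebsch-Gordan type estimates should yield a bound of the form
$$\text{d}_W\!\left(\frac{D_\ell^{\le Q}}{\sqrt{\Var(D_\ell^{\le Q})}},\,Z\right) \;\lesssim\; M(Q)\,\ell^{-\alpha}$$
for some $\alpha>0$, with $M(Q)$ growing at most exponentially in $Q$ owing to the combinatorial explosion of contraction patterns as the maximal chaos order increases. Setting $Q := c\log\ell$ with $c>0$ sufficiently small then equates the truncation error $Q^{-1/4}$ with the chaos-wise CLT error $e^{\beta Q}\ell^{-\alpha}$ at order $(\log\ell)^{-1/4}$, delivering the announced rate.

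The crux, and the main technical obstacle, is the third step: pushing the Nourdin-Peccati contraction estimates through with explicit, controlled $q$-dependence. One must simultaneously track the $\ell^{-d}$ decay of Gegenbauer-moment integrals and the combinatorial growth in $q$, and this is exactly what forces the development of sharp estimates on integrals of triples of hyperspherical harmonics --- the $d$-dimensional analogue of the classical Gaunt coefficients --- highlighted as the main new technical input in the introduction.
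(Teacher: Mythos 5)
Your proposal follows essentially the same route as the paper: the Hermite/chaos expansion of the defect, truncation at $Q\asymp\log\ell$ with a normalized cost of $O(Q^{-1/4})$, a term-by-term Stein--Malliavin contraction bound whose constants grow at most exponentially in the chaos order, and a final optimization in $Q$ that leaves the truncation error $(\log\ell)^{-1/4}$ dominant. The one step you state as a plan rather than prove --- uniform-in-$q$ contraction estimates with explicit combinatorial constants, together with the sharpened third-chaos input needed for $d=3,4,5$ --- is precisely what the paper supplies via Lemma \ref{comevale} and Lemmas \ref{lem}, \ref{lemCG}, so your outline is correct and matches the paper's argument.
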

The proof of Theorem \ref{th qclt} will be given in \S \ref{proofqclt} and requires also some intermediate key results  that for 
 $d=2$ and $d>5$ have been shown in \cite{maudom}. We are able to solve the remaining cases $d=3,4,5$ improving also previous results in \cite[Proposition 2.3]{maudom}, by means of next Lemma \ref{lem} and Lemma \ref{lemCG}.  See \S \ref{on asymp} for motivating details and further discussions. 

Let us denote by $H_3$ the third Hermite polynomial, i.e. $H_3(t) = t^3 -3t,$ $t\in \mathbb R$, and by $\text{d}_{\mathcal D}$ either the Wasserstein \paref{wass}, Kolmogorov or Total Variation distance (see \cite[\S C.2]{noupebook}), then
\begin{lemma}\label{lem}
For $d\ge 2$, as $\ell\to +\infty$ 
\begin{equation}\label{ciaone}
\text{d}_{\mathcal D}\left( \frac{\int_{\mathbb S^d} H_3(T_\ell(x))\,dx}{\sqrt{\Var\left( \int_{\mathbb S^d} H_3(T_\ell(x))\,dx  \right)}}, Z \right) = O\left ( \sqrt{\frac{\text{cum}_4\left( \int_{\mathbb S^d} H_3(T_\ell(x))\,dx   \right )}{\Var\left (\int_{\mathbb S^d} H_3(T_\ell(x))\,dx  \right )^2}}   \right ) =O\left(\frac{1}{ \sqrt{\ell^{d-1}}}   \right),
\end{equation} 
where $\text{cum}_4\left( \int_{\mathbb S^d} H_3(T_\ell(x))\,dx   \right )$ denotes the fourth cumulant\footnote{see \cite[(3.1.3)]{P-T}} of $\int_{\mathbb S^d} H_3(T_\ell(x))\,dx$. 
\end{lemma} 
(Note that the first equality in \paref{ciaone} is an application of the so-called Fourth Moment Theorem \cite[Theorem 5.2.7]{noupebook}.)
In particular,  Lemma \ref{lem} entails that, as $\ell\to +\infty$, 
$$\frac{\int_{\mathbb S^d} H_3(T_\ell(x))\,dx}{\sqrt{\Var\left( \int_{\mathbb S^d} H_3(T_\ell(x))\,dx  \right)}} \mathop{\goto}^{\mathcal L} Z,$$
where $Z$ is a standard Gaussian random variable. 
Note that, for $d\ge 3$, (2.21) in \cite[Proposition 2.3]{maudom} gives only  $O\left( \ell^{-(d-5)/4}  \right)$ for the l.h.s. of \paref{ciaone} - which does not vanish when $d\in \lbrace 3,4,5 \rbrace$, as $\ell\to +\infty$. 

The random variable $\int_{\mathbb S^d} H_3(T_\ell(x))\,dx$ is the so-called bispectrum of $T_\ell$ and it is of independent interest (see \cite{marinucci2006,marinucci2008}). 
In particular for $d=2$, the information on the bispectrum of $T_\ell$ are used to test some features of Cosmic Microwave Background \cite{dogiocam}; on the $2$-sphere, \paref{ciaone} coincides with the result found by Marinucci in \cite{marinucci2008}.

Our argument in order to prove Lemma \ref{lem} 
requires technical computations involving concatenated sums of integrals of three hyperspherical harmonics of the form 
\begin{equation}\label{defcg}
{\mathcal G}^{\ell,m_3}_{\ell,m_1,\ell,m_2;d}:=\int_{\mathbb S^d} Y_{\ell,m_1;d}(x) Y_{\ell,m_2;d}(x) Y_{\ell,m_3;d}(x)\,dx,
\end{equation}
for $m_1,m_2,m_3\in \lbrace 1,2,\dots , n_{\ell;d} \rbrace$; (note that by definition, ${\mathcal G}^{\ell,m_3}_{\ell,m_1,\ell,m_2;d}$ is invariant under any permutations of indexes $m_1,m_2,m_3$).  The integral
${\mathcal G}^{\ell,m_3}_{\ell,m_1,\ell,m_2;d}$ is strictly related to so-called Clebsch-Gordan coefficients \cite{faraut, dogiocam, vilenkin2} for the special orthogonal group $SO(d+1)$, which  play a key role in group representation properties of the latter. 

The integral in \paref{defcg} is well-known  for $d=2$ (so-called Gaunt formula - see \cite[Proposition 3.43]{dogiocam})
and several applications by many authors can be found (see \cite{vale3, marinucci2006, marinucci2008, dogiocam, Nonlin} e.g.), because of its importance also in the quantum theory of angular momentum. From \cite[(5.6.2.12),(5.6.2.13)]{varshalovich}, for the two-dimensional case,
\begin{equation}\label{gaunt}
{\mathcal G}^{\ell,m_3}_{\ell,m_1,\ell,m_2;2}= \sqrt{\frac{2\ell+1}{4\pi}} C_{\ell,m_1,\ell,m_2}^{\ell,m_3}\cdot C^{\ell,0}_{\ell,0,\ell,0},
\end{equation}
where $C_{\ell_1,m_1,\ell_2,m_2}^{\ell_3,m_3}$ denote Clebsch-Gordan coefficients (see \cite[Chapter 8]{varshalovich} or \cite[\S 3.5]{dogiocam}) for the group $SO(3)$; explicit formulas are known for the latter. (Note that one usually considers $m\in \lbrace -\ell, \dots, \ell \rbrace$ instead of $m\in \lbrace 1,2,\dots, 2\ell+1\rbrace$, see e.g. \cite[\S 3]{dogiocam}.)
 To the best of our knowledge, analogous estimates as those for the $2$-dimensional case are not available in the literature for higher dimensional spheres; in what follows, we therefore need to develop some \emph{novel} tools in order to complete our argument. 

 The main achievement in this direction is the following result whose proof, given in \S \ref{CG}, relies on simple ideas that however could be used in other circumstances involving Clebsch-Gordan coefficients for any topological compact group. Moreover, this kind of results can be applied to solve problems on the hypersphere as those investigated in \cite{marinucci2006,marinucci2008} for the two-dimensional case; we believe they could be useful in order to study also other statistical issues on $\mathbb S^d$, a topic which has recently received some attention (see e.g. \cite{claudiohyper}).

\begin{lemma}\label{lemCG}
For every even $\ell \in \mathbb N$, $M, M'\in \lbrace 1,2,\dots, n_{\ell;d}\rbrace$ and $d\ge 2$
$$
\sum_{m_1,m_2=1}^{n_{\ell;d}} {\mathcal G}^{\ell,M}_{\ell,m_1,\ell,m_2;d}\,{\mathcal G}^{\ell,M'}_{\ell,m_1,\ell,m_2;d}= \delta_{M}^{M'} \frac{(n_{\ell;d})^2}{|\mathbb S^d|}\frac{|\mathbb S^{d-1}| }{|\mathbb S^{d}|}\int_{-1}^1 G_{\ell;d}(t)^3 \left(\sqrt{1-t^2}\right)^{d-2}dt. 
$$
\end{lemma}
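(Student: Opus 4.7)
The plan is to collapse the double sum over $m_1,m_2$ via the addition formula \paref{addformula} and then exploit the reproducing-kernel property of the $\ell$-th eigenspace. I would substitute \paref{defcg} into the LHS and interchange the finite sums with the integrations, writing it as
$$
\int_{\mathbb{S}^d}\!\!\int_{\mathbb{S}^d} Y_{\ell,M;d}(x)\,Y_{\ell,M';d}(y) \Bigl[\sum_{m_1=1}^{n_{\ell;d}} Y_{\ell,m_1;d}(x)Y_{\ell,m_1;d}(y)\Bigr]\!\Bigl[\sum_{m_2=1}^{n_{\ell;d}} Y_{\ell,m_2;d}(x)Y_{\ell,m_2;d}(y)\Bigr] dx\,dy.
$$
Two applications of \paref{addformula} collapse each bracketed sum to $(n_{\ell;d}/|\mathbb{S}^d|)\,G_{\ell;d}(\cos d(x,y))$, reducing the LHS to $(n_{\ell;d}/|\mathbb{S}^d|)^2$ times $\int\!\!\int Y_{\ell,M;d}(x) Y_{\ell,M';d}(y)\,G_{\ell;d}(\cos d(x,y))^2\,dx\,dy$.

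Next I would compute the inner integral in $y$ for fixed $x$. Since $G_{\ell;d}$ is a polynomial of degree $\ell$, the map $y\mapsto G_{\ell;d}(\cos d(x,y))^2$ is a zonal function of degree $\le 2\ell$, so it admits a finite Gegenbauer expansion $\sum_{k=0}^{2\ell} c_k\, G_{k;d}(\cos d(x,y))$. The reproducing identity
$$
\int_{\mathbb{S}^d} G_{k;d}(\cos d(x,y))\,Y_{\ell,M';d}(y)\,dy = \delta_{k,\ell}\,\frac{|\mathbb{S}^d|}{n_{\ell;d}}\,Y_{\ell,M';d}(x),
$$
which is immediate from \paref{addformula} together with orthonormality of the $Y_{k,m;d}$, picks out only $k=\ell$, so the inner integral equals $c_\ell\,(|\mathbb{S}^d|/n_{\ell;d})\,Y_{\ell,M';d}(x)$. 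I would then read off $c_\ell$ from Gegenbauer orthogonality on $[-1,1]$ with weight $(1-t^2)^{(d-2)/2}$: its numerator is $\int_{-1}^{1} G_{\ell;d}(t)^3 (1-t^2)^{(d-2)/2}\,dt$, while the denominator $\int_{-1}^{1} G_{\ell;d}(t)^2 (1-t^2)^{(d-2)/2}\,dt$ can be identified with $|\mathbb{S}^d|/(n_{\ell;d}|\mathbb{S}^{d-1}|)$ by computing $\int_{\mathbb{S}^d} G_{\ell;d}(\cos d(x,y))^2\,dy$ in two different ways (polar coordinates around $x$ on the one hand, versus \paref{addformula} together with the pointwise identity $\sum_m Y_{\ell,m;d}(x)^2 = n_{\ell;d}/|\mathbb{S}^d|$ on the other).

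Substituting the inner identity back, the remaining $x$-integral should produce $\delta_M^{M'}$ by orthonormality of the $Y_{\ell,m;d}$, and collecting the accumulated constants will yield exactly the stated right-hand side. For odd $\ell$ the coefficient $c_\ell$ vanishes by parity, so both sides are trivially zero; the hypothesis that $\ell$ is even is precisely what makes the identity non-trivial. The computation itself is elementary; the main obstacle will be the bookkeeping of normalization constants, i.e.\ aligning the reproducing-kernel convention dictated by \paref{addformula} with the Jacobi-weight orthogonality of the normalized Gegenbauer polynomials so that the final constants match the stated expression on the nose.
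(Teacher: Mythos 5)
Your proposal is correct and follows essentially the same route as the paper's proof: double application of the addition formula to reduce the left-hand side to $(n_{\ell;d}/|\mathbb S^d|)^2\int\!\!\int Y_{\ell,M}(x)Y_{\ell,M'}(y)G_{\ell;d}(\cos d(x,y))^2\,dx\,dy$, a finite Gegenbauer expansion of $G_{\ell;d}^2$, and projection back onto the $\ell$-th eigenspace via the addition formula and orthonormality of the $Y_{\ell,m;d}$, with the normalization $\int_{-1}^1 G_{\ell;d}(t)^2(1-t^2)^{(d-2)/2}dt=|\mathbb S^d|/(n_{\ell;d}|\mathbb S^{d-1}|)$ accounting for the constants. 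The only differences are organizational (you integrate in $y$ first via the reproducing identity rather than expanding everything and integrating at the end), plus your correct side remark on the odd-$\ell$ case.
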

In the case $d=2$, from \paref{gaunt}, by orthonormality properties of Clebsch-Gordan coefficients \cite[(3.62)]{dogiocam} and since $(2\ell+1)^{-1}\left( C^{\ell,0}_{\ell,0,\ell,0} \right)^2= \int_{-1}^1 P_{\ell}(t)^3\,dt$ (again from \paref{gaunt}, see also \cite{Nonlin}), we have
\begin{equation*}
\sum_{m_1,m_2} {\mathcal G}^{\ell,M}_{\ell,m_1,\ell,m_2;2}\,{\mathcal G}^{\ell,M'}_{\ell,m_1,\ell,m_2;2} = \delta_{M}^{M'} \frac{(2\ell+1)^2}{2\cdot 4\pi}\int_{-1}^1 P_{\ell}(t)^3\,dt,
\end{equation*}
which coincides with the statement of Lemma \ref{lemCG}.

\subsection{Plan of the paper}\label{plan}

In \S \ref{det vs ran} we give motivations for our work. 
In \S \ref{on the proof} we briefly explain our argument to prove the main results in \S\ref{mainsec}, whereas \S\ref{wienerintro} fixes some notation and background about Wiener chaoses and Stein-Malliavin techniques for distributional approximations. The proof of Proposition \ref{prop var} is given in \S\ref{proofvar} and we prove Theorem \ref{th qclt} in \S \ref{proofqclt}, while \S\ref{345} deals with Lemma \ref{lem} and Lemma \ref{lemCG}. Finally, we collect some technical computations and intermediate results in the Appendix \S\ref{app}.

\subsection{Acknowledgements}

This topic was suggested by Domenico Marinucci. 
The author would like to thank him, Giovanni Peccati and Igor Wigman for useful conversations, and an anonymous referee for insightful comments. 

The research leading to this work was carried out within the framework of the ERC \emph{Pascal} project no. 277742 and of the Grant STARS (R-AGR-0502-10) at Luxembourg University. The author is currently supported by the Fondation Sciences Math\'ematiques de Paris and the ANR-17-CE40-0008 project \emph{Unirandom}. 

\section{Outline of the paper}

\subsection{Motivations}\label{det vs ran}

Berry argued that, at least for ``generic" chaotic surfaces, the local behavior of (deterministic) eigenfunctions should be \emph{universal} \cite{berry}. He proposed to compare the eigenfunction $f$ in \paref{eqSch} of large eigenvalue $E$ to a ``typical" instance of a monochromatic random wave with wavenumber $\sqrt{E}$; we can define the latter as the centered Gaussian field $W=(W(x))_{x\in \mathbb R^2}$ on the plane, whose covariance structure is given by 
\begin{equation}\label{berrycov}
\Cov(W(x), W(y)) = J_0\left (\sqrt{E}|x-y| \right ),\qquad x,y\in \mathbb R^2,
\end{equation}
$J_0$ being the $0$-order Bessel function \cite[\S 1.7]{szego}. Local properties of $f$ can then be predicted by $W$; for instance, nodal lines of the latter should model nodal lines of $f$ (see \cite{wigsurvey}). 

In the spherical case, from \paref{cov} with $d=2$, the random model \paref{rf} has covariance kernel given by 
$
\Cov(T_\ell(x),T_\ell(y))=P_\ell(\cos d(x,y))
$, $x,y\in \mathbb S^2$, where $P_\ell \equiv G_{\ell;2}$ is still the $\ell$-th Legendre polynomial \cite[\S 4.7]{szego}. Hilb's asymptotics \cite[Theorem 8.21.6]{szego} gives, for large eigenvalues, 
$$
P_\ell(\cos \theta)\sim \sqrt{\frac{\theta}{\sin \theta}}\, J_0\left ((\ell +1/2)\theta \right )
$$
uniformly for $\theta\in [0, \pi - \varepsilon]$, 
similar to \paref{berrycov} but for the square root that keeps a trace about the geometry of the sphere. In recent years, the geometry of random spherical eigenfunctions has been studied by several papers, motivated also by applications in Cosmology - for instance concerning the analysis of CMB \cite{dogiocam}. In particular, so-called Lipschitz Killing curvatures \cite[\S 6.3]{adlertaylor} have been investigated; namely (in dimension $d=2$), the boundary length \cite{mauphd, Wig, wigsurvey, MRW17}, the area \cite{maudom, def, Nonlin} and the Euler-Poincar\'e characteristic \cite{eulvale, fluct} of excursion sets at any level. For each of the just mentioned geometric functionals, the same qualitative behavior has been observed, i.e. a lower-order asymptotic variance in the nodal case (see \cite[\S 1.2]{eulvale} for an overview). 

This paper plays a key role in the analysis of Lipschitz-Killing curvatures on the hypersphere; indeed, we complete the investigation of the empirical volume of excursion sets that was started in \cite{maudom}. 
Proposition \ref{prop var} compared to \paref{varmaudom} confirms the behavior predicted by Berry for the variance \cite{berry2002} also in this case: it is clear that the leading constant in \paref{varmaudom} vanishes for $z=0$. This phenomenon has a deeper interpretation related to chaotic expansions which also explain the rate we obtain in the nodal case (Theorem \ref{th qclt} compared to \paref{wassmaudom}); we shall be back on this issue in \S\ref{on rates}. 

A possible future research topic can be the investigation of all the others Lipschitz-Killing curvatures on the hypersphere in any dimension or, more generally, any \emph{nice} compact manifold (as the multidimensional torus $\mathbb T^d:=\mathbb R^d/\mathbb Z^d$).

\subsection{On the proofs of the main results}\label{on the proof}

Our approach to establish Proposition \ref{prop var} and Theorem \ref{th qclt} relies, among other things, on   the chaotic decomposition of $D_\ell$ \paref{defect} (this technique was used also in \cite{Nonlin, misto, MRW17} e.g.). Since the defect is a square integrable fuctional of a Gaussian field, it can be written as a series
$
D_\ell = \sum_{q=0}^{+\infty} D_\ell[q],
$
converging in $L^2(\P)$, where $D_\ell[q]$ is the orthogonal projection of $D_\ell$ onto the so-called $q$-th Wiener chaos ($D_\ell[0]=\E[D_\ell]$); the random variables  $D_\ell[q]$, $q\ge 0$ are pairwise orthogonal. 
More precisely, we have the following statement, whose proof is given in \S\ref{appchaos}. Recall that Hermite polynomials \cite[\S 5.5]{szego} $(H_k)_{k\ge 0}$ are defined as $H_0(t)=1$ and for $k\ge 1$
$$
H_k(t) := (-1)^k \phi(t)^{-1} \frac{d^k}{dt^k} \phi(t),\qquad t\in \mathbb R,
$$
where $\phi$ still denotes the probability density of a standard Gaussian random variable. For instance, $H_1(t)=t$, $H_2(t)=t^2-1$, $H_3(t) = t^3 - 3t$ and so on.
\begin{lemma}\label{lem chaos}
The Wiener-It\^o chaos decomposition of the defect $D_\ell$ is 
\begin{equation}\label{chaosexpintro}
D_\ell =  \sum_{q=1}^{+\infty} D_\ell[2q+1]=\sum_{q=1}^{+\infty} \frac{J_{2q+1}}{(2q+1)!}\int_{\mathbb S^d}H_{2q+1}(T_\ell(x))\,dx,
\end{equation}
where $$
J_{2q+1} := \frac{2}{\sqrt{\pi}} H_{2q}(0).
$$
\end{lemma}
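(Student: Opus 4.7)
The plan is to apply the one-dimensional Hermite expansion of $\mathcal H$ pointwise at each $x\in\mathbb S^d$ and then integrate over the sphere, swapping sum and integral at the end.

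Since $\mathcal H$ is bounded, it lies in $L^2(\R,\phi(t)\,dt)$ and admits the Hermite expansion
$$\mathcal H(t)=\sum_{k=0}^{+\infty}\frac{J_k}{k!}H_k(t),\qquad J_k:=\int_{\R}\mathcal H(t)H_k(t)\phi(t)\,dt,$$
convergent in $L^2(\phi\,dt)$ by completeness of the probabilists' Hermite basis. Oddness of $\mathcal H$ immediately yields $J_k=0$ for every even $k$; in particular, $J_0=\E[\mathcal H(Z)]=0$. For odd $k\ge 1$, the Rodrigues identity used to define $H_k$ implies $H_k(t)\phi(t)=-\frac{d}{dt}[H_{k-1}(t)\phi(t)]$, so integration by parts gives
$$J_k=2\int_0^{+\infty}H_k(t)\phi(t)\,dt=2H_{k-1}(0)\phi(0),$$
which, after substituting the value of $\phi(0)$, recovers the stated formula for $J_{2q+1}$.

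For each fixed $x\in\mathbb S^d$ the variable $T_\ell(x)$ is standard Gaussian by \paref{cov}, so plugging $t=T_\ell(x)$ into the above expansion yields, in $L^2(\P)$,
$$\mathcal H(T_\ell(x))=\sum_{k\ge 0}\frac{J_k}{k!}H_k(T_\ell(x)).$$
By orthogonality of Hermite polynomials under the standard Gaussian measure, the partial sums $S_N(x):=\sum_{k=0}^{N}\frac{J_k}{k!}H_k(T_\ell(x))$ satisfy $\E[S_N(x)^2]\le\E[\mathcal H(T_\ell(x))^2]=1$ uniformly in $x$ and $N$. A routine Fubini argument on $\Omega\times\mathbb S^d$, combined with dominated convergence in $L^2(\P)$ based on this uniform bound and the finite measure of $\mathbb S^d$, then allows term-by-term integration:
$$D_\ell=\int_{\mathbb S^d}\mathcal H(T_\ell(x))\,dx=\sum_{k\ge 1,\,k\text{ odd}}\frac{J_k}{k!}\int_{\mathbb S^d}H_k(T_\ell(x))\,dx,$$
with the resulting series converging in $L^2(\P)$. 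Since each summand lies in the $k$-th Wiener chaos generated by $T_\ell$ and distinct chaoses are orthogonal in $L^2(\P)$, these summands are precisely the chaos projections $D_\ell[k]$.

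It remains to dispose of the first-chaos component. Since $H_1(t)=t$, we have $D_\ell[1]=J_1\int_{\mathbb S^d}T_\ell(x)\,dx$, and $T_\ell$ is an $L^2(\mathbb S^d)$-linear combination of hyperspherical harmonics of degree $\ell\ge 1$, which are orthogonal to the constant function (a degree-$0$ harmonic); consequently $\int_{\mathbb S^d}T_\ell(x)\,dx=0$ almost surely, so the effective chaos sum starts at $k=3$, i.e.\ at $q=1$, which is exactly \paref{chaosexpintro}. The only delicate step is the justification of the interchange of sum and integral, but this follows cleanly from the uniform $L^2$-bound on the partial sums together with Fubini, so there is no substantive obstacle beyond bookkeeping.
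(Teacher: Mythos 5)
Your proof is correct and takes essentially the same route as the paper's: both expand the Heaviside function (equivalently the indicator $1_{(0,+\infty)}$) in Hermite polynomials, obtain the coefficients by integration by parts via the Rodrigues formula, kill the even-order terms by symmetry and the first chaos because $\int_{\mathbb S^d}T_\ell(x)\,dx=0$, and justify term-by-term integration through an $L^2(\P)$ tail estimate over the finite-measure sphere (the paper phrases this as a Cauchy-sequence bound using $|G_{\ell;d}|\le 1$, you via Cauchy--Schwarz and the $x$-independent Hermite tail; these are equivalent). The only caveat is that your computation gives $J_{2q+1}=2\phi(0)H_{2q}(0)=\sqrt{2/\pi}\,H_{2q}(0)$, which agrees with the paper's own proof but not with the constant $\frac{2}{\sqrt{\pi}}H_{2q}(0)$ displayed in the lemma statement --- a typo in the paper, not a gap in your argument.
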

The first marked difference compared to the non-zero level case treated in \cite{maudom} is that the non-linear functionals investigated in the latter reference are assumed to have \emph{non}-vanishing (and asymptotically leading) second chaotic component, whereas from \paref{chaosexpintro} we have $D_\ell[2]=0$ for every $\ell$. Some of the tools used in \cite{maudom} are hence not suitable, and we will need some new technical results. We will clarify this point in what follows. 

\subsubsection{On the variance}\label{on the var}

Our argument  to prove Proposition \ref{prop var} is essentially equivalent to the one obtained by generalizing to higher dimensional setting the approach used in \cite{def} to find the defect asymptotic variance on the $2$-sphere.

Let us keep in mind \paref{chaosexpintro}. By virtue of the orthogonality and isometric properties of chaotic projections and some straightforward computations, we have that 
\begin{equation}\label{var introtilde}
\Var(D_\ell) = |\mathbb S^d||\mathbb S^{d-1}|\sum_{q=1}^{+\infty} \frac{J_{2q+1}^2}{(2q+1)!} \int_{0}^{\pi} G_{\ell;d}(\cos \vartheta)^{2q+1}(\sin \vartheta)^{d-1}\,d\vartheta;
\end{equation}
since Gegenbauer polynomials are symmetric, that is, $G_{\ell;d}(-t)=(-1)^\ell G_{\ell;d}(t)$, $t\in [-1,1]$ (see e.g. \cite[\S 4.7]{szego}), we deduce that the integral in the r.h.s. of \paref{var introtilde} vanishes for odd $\ell$. For even $\ell$, we have
\begin{equation}\label{var intro}
\Var(D_\ell)=2\sum_{q=1}^{+\infty} \frac{J_{2q+1}^2}{(2q+1)!} |\mathbb S^d||\mathbb S^{d-1}|\int_{0}^{\pi/2} G_{\ell;d}(\cos \vartheta)^{2q+1}(\sin \vartheta)^{d-1}\,d\vartheta.
\end{equation}
Lemma \ref{lem media} hence implies that 
 $D_\ell=0$ for odd $\ell$, as anticipated in \S\ref{introH}. 
\cite[Proposition 1.1]{maudom} gives, as $\ell\to +\infty$,
$$
\int_{0}^{\pi/2} G_{\ell;d}(\cos \vartheta)^{2q+1}(\sin \vartheta)^{d-1}\,d\vartheta = \frac{c_{2q+1;d}}{\ell^d}(1+o(1)),
$$
for some nonnegative constant $c_{2q+1;d}$ (in \cite{maudom,def} it was conjectured that $c_{2q+1;d}>0$ for every $q$ and $d$). One would hence expect Proposition \ref{prop var} to hold with 
$$C_d = 2|\mathbb S^d||\mathbb S^{d-1}|\sum_{q=1}^{+\infty} \frac{J_{2q+1}^2}{(2q+1)!}c_{2q+1;d} .$$
In \S\ref{proofvar} we will properly prove it. 
Recall now that the second moment of Gegenbauer polynomials is 
$$
\int_{0}^{\pi} G_{\ell;d}(\cos \vartheta)^{2}(\sin \vartheta)^{d-1}\,d\vartheta = \frac{|\mathbb S^d|}{|\mathbb S^{d-1}|\, n_{\ell;d}},
$$
in particular it is of bigger order, as $\ell\to +\infty$, than the $q$-th moment for any $q\ge 3$. This make easier the investigation of the variance of non-linear functionals whose second chaotic component is non-zero and asymptotically leading (as those considered in \cite{maudom}). Indeed, in the latter case the leading term for the asymptotic variance corresponds to the second chaotic projection, whereas for the defect all terms equally contribute. 

\subsubsection{On the asymptotic distribution}\label{on asymp}

As a consequence of what was mentioned just above in \S\ref{on the var}, 
the limiting distribution of functionals whose second chaotic component is non-zero equals to the asymptotic law of only \emph{one} chaotic term (the second). Since the latter is proportional to a sum of independent random variables, an application of the standard Central Limit Theorem allows to conclude the investigation. For the defect instead, we have to study the asymptotic behavior of every chaotic component.

The CLT recalled in \paref{clt2} and \paref{wassmaudom} make it plausible  to conjecture that  the defect is asymptotically Gaussian in any dimension; we wish hence to extend and generalize \paref{clt2} and \paref{wassmaudom}.
To prove a CLT for the series \paref{chaosexpintro},
we first need to deal with single chaotic components.  Theorem 1.2 in \cite{maudom} ensures that $D_\ell[2q+1]$ is, as $\ell\to +\infty$, Gaussian for every pair $(2q+1,d)$ except for $(3,d)$  when $d=3,4,5$. It is however  reasonable to believe that also $D_\ell[3]$ is asymptotically normal in any dimension, as Lemma \ref{lem} states. 

To prove the latter, since we are in a fixed chaos - the third one - we can use Fourth Moment Theorem \cite[Theorem 5.2.7]{noupebook} i.e., to have asymptotic Gaussianity it is enough (and necessary) that
\begin{equation}\label{4th}
\frac{\text{cum}_4\left(\int_{\mathbb S^d}H_3(T_\ell(x))\,dx   \right)}{\left(\Var\left (\int_{\mathbb S^d}H_3(T_\ell(x))\,dx \right )\right)^2}\to 0,
\end{equation}
where by $\text{cum}_4(X)$ we denote the $4$-th cumulant \cite[(3.1.3)]{P-T} of the random variable $X$. Note that from Thorem 5.2.6 in \cite{noupebook}, the l.h.s. of \paref{4th} allows to estimate moreover the rate of convergence to the Gaussian distribution in various probability metrics \cite[\S C.2]{noupebook}, Wasserstein distance \paref{wass} included (see \S\ref{4th mom}).

By the properties of cumulants \cite[\S 3.1]{P-T}, we have 
\begin{equation*}
\begin{split}
&\text{cum}_4 \left(\int_{\mathbb S^d}H_3(T_\ell(x))\,dx\right)\cr
&= \int_{(\mathbb S^d)^4} \text{cum}\left( H_3(T_\ell(w)),H_3(T_\ell(z)),H_3(T_\ell(w')),H_3(T_\ell(z'))\right)\,dw dz dw' dz'. 
\end{split}
\end{equation*}
The diagram formula for Hermite polynomials \cite[Proposition 4.15]{dogiocam} applied to the integrand of the r.h.s. of the last equality and results by Nourdin and Peccati, in particular \cite[Lemma 5.2.4]{noupebook}, ensure that the major contribution for the fourth cumulant of $\int_{\mathbb S^d}H_3(T_\ell(x))\,dx$ comes from so-called \emph{circulant diagrams}. Indeed, in \cite[Lemma 4.1]{maudom} it has been shown that the contribution of the latter can be expressed as multiple integrals of products of powers of Gegenbauer polynomials. We are then left in our case with the following 
\begin{equation}\label{circdiag}
\begin{split}
I_{\ell;d}:=\int_{(\mathbb S^d)^4} &G_{\ell;d}(\cos d(w,z))G_{\ell;d}(\cos d(w,w'))^2 \cr
& \times G_{\ell;d}(\cos d(w',z'))G_{\ell;d}(\cos d(z',z))^2\,dw dz dw' dz'.
\end{split}
\end{equation}
In \cite[Proposition 4.2, Proposition 4.3]{maudom} to prove \paref{4th}, upper bounds for \paref{circdiag} are given which are too big for $d=3,4,5$: the technique consists first of bounding the contribution of one of the involved Gegenbauer polynomials by its sup-norm ($=1)$ and then of using Cauchy-Schwarz inequalities. This reduces to deal simply with moments of Gegenbauer polynomials but allows to get only not satisfactory bounds.  
Our argument is \emph{subtler} and more difficult, since we have to compute the \emph{exact} asymptotics for $I_{\ell;d}$ in \paref{circdiag}, as $\ell\to +\infty$. 

The addition formula \paref{addformula} applied several times in \paref{circdiag}, leads to concatenated sums of integrals of three hyperspherical harmonics. With the same notation as in \paref{defcg} we find
\begin{equation*}
I_{\ell;d}=\left(\frac{|\mathbb S^d|}{n_{\ell;d}}\right)^6\sum_{m_1,m_2,m_3,m_1',m_2',m_3'=1}^{n_{\ell;d}}{\mathcal G}^{\ell,m_1}_{\ell,m_2',\ell,m_3';d}{\mathcal G}^{\ell,m_1'}_{\ell,m_2,\ell,m_3;d}{\mathcal G}^{\ell,m_1}_{\ell,m_2,\ell,m_3;d}{\mathcal G}^{\ell,m_1'}_{\ell,m_2',\ell,m_3';d}.
\end{equation*}
The idea now is to find some useful property for double sums of these coefficients \paref{defcg}  (as done in Lemma \ref{lemCG}). 
Once Lemma \ref{lem} is proved, then an argument similar to the one given in the proof of Corollary 4.2 in \cite{Nonlin}, provides a CLT for the defect in any dimension. We are however interested in subtler results: rates of convergence to the limiting distribution - as explained in the next section. 

\subsubsection{On rates of convergence}\label{on rates}

In order to obtain rates of convergence, i.e. to prove our Theorem \ref{th qclt}, it is \emph{not} enough to generalize to higher dimensional setting the approach used in \cite{Nonlin}. Indeed in particular one needs  refined estimates on the distance in distribution between the ``truncated" defect (see just below) and the Gaussian law. 

Let us truncate the series \paref{chaosexpintro} at some frequency $m$, obtaining $D_\ell^m=\sum_{q=1}^{m} D_\ell[2q+1]$. We know that $D_\ell^m$ is Gaussian, as $\ell\to +\infty$, (since it is a linear combination of asymptotically normal random variables living in different order chaoses; see \cite{peccatitudor}) and we can then compute the rate of convergence in Wasserstein distance \paref{wass} to the limiting distribution by using Stein-Malliavin techniques for Normal approximations \cite[Chapters 5, 6]{noupebook}.  
The tail $D_\ell - D_\ell^m$ can be controlled by its $L^2(\P)$-norm. Summing up all contributions  and choosing an optimal speed  $m=m(\ell)$, we obtain Theorem \ref{th qclt}.

The rate of convergence is slower than in the non-nodal case \paref{wassmaudom}. Indeed, here all chaoses in the Wiener-It\^o expansion for the defect \paref{chaosexpintro} contribute, whereas for $z\ne 0$ the second chaos does not vanish and hence gives the dominating term, making easier also the estimation of the speed of convergence. For the defect instead, one has to control the whole series. 

As remarked also in \cite[\S1.2]{eulvale}, for Lipschitz-Killing curvatures \cite[\S 6.3]{adlertaylor} on the sphere (and on other manifolds, such as the torus - see \cite{misto}) the second chaos dominates only in the non-nodal case, giving a powerful explanation for Berry's cancellation phenomenon concerning the variance. 

\section{Wiener chaoses and Stein-Malliavin techniques}\label{wienerintro}

For a complete discussion on the following topics see  \cite{noupebook}. 

\subsection{Random eigenfunctions as isonormal Gaussian processes}

Let $X=(X(f))_{f\in L^2(\mathbb S^d)}$ be an isonormal Gaussian process on $L^2(\mathbb S^d)$, the space of square integrable functions on the $d$-sphere ($d\ge 2$).
We mean a real-valued centered Gaussian field  indexed by the elements of $L^2(\mathbb S^d)$ verifying the isometric property, i.e. 
\begin{equation*}
\Cov(X(f),X(h) ) = \langle f, h \rangle_{L^2(\mathbb S^d)}:=\int_{\mathbb S^d}f(x)h(x)\,dx,\qquad f,h\in L^2(\mathbb S^d). 
\end{equation*}
We can construct it as follows. Let us denote by $\mathcal B(\mathbb S^d)$ the Borel $\sigma$-field on $\mathbb S^d$ and let $W=\lbrace W(A): A\in \mathcal B(\mathbb S^d)\rbrace$ be a centered Gaussian family on $\mathbb S^d$  such that
$$
\Cov(W(A),W(B)) = \int_{\mathbb S^d} 1_{A\cap B}(x)\,dx,
$$
where $1_{A\cap B}$ denotes the indicator function of the set $A\cap B$. 
The random field $X$ on $L^2(\mathbb S^d)$ defined as
\begin{equation}\label{iso}
X(f) := \int_{\mathbb S^d} f(x)\,dW(x),\qquad f\in L^2(\mathbb S^d),
\end{equation}
i.e. the Wiener-It\^o integral of $f$ with respect to the Gaussian measure $W$, is the isonormal Gaussian process on the $d$-sphere. 

Note that for random eigenfunctions \paref{rf} it holds
\begin{equation}\label{eigen}
T_\ell(x) \mathop{=}^{\mathcal L} X(f_{\ell,x}),\qquad x\in \mathbb S^d,
\end{equation}
as stochastic processes, 
where $X$ is defined as in \paref{iso} and $f_{\ell,x}:= f_{\ell,x;d}$ is given by
\begin{equation}\label{kernel}
f_{\ell,x}(y):= \sqrt{\frac{n_{\ell;d}}{|\mathbb S^d|}} G_{\ell;d}(\cos d(x,y)),\qquad y\in \mathbb S^d.
\end{equation}

\subsection{Defect in Wiener chaoses}\label{defchaos}

Let us now recall the notion of Wiener chaos, mentioned in \S\ref{on the proof}. As before,  let $(H_k)_{k\ge 0}$ denote the sequence of Hermite polynomials \cite[\S 5.5]{szego}.  Normalized Hermite polynomials $\left(   \frac{H_k}{\sqrt{k!}} \right)_{k\ge 0}$ form an orthonormal basis of $L^2(\R, \phi(t)\,dt)$, the space of square integrable functions on the real line endowed with the Gaussian measure. 
Recall that for jointly Gaussian random variables $Z_1,Z_2\sim \mathcal N(0,1)$ and $k_1,k_2\ge 0$, we have
\begin{equation}\label{producthermite}
\E[H_{k_1}(Z_1) H_{k_2}(Z_2)] = k_1!\left( \E[Z_1 Z_2] \right)^{k_1} \delta_{k_1}^{k_2}.
\end{equation}
For each integer $q\ge 0$, consider the closure $C_q$ in $L^2(\P)$ of the linear space generated by random variables of the form
\begin{equation*}\label{hermrv}
H_q(X(f)),\qquad f\in L^2(\mathbb S^d),\  \| f\|_{L^2(\mathbb S^d)} =1. 
\end{equation*}
The space $C_q$ is the so-called $q$-th Wiener chaos associated with $X$. By \paref{producthermite}, it is easy to check that $C_q \perp C_{q'}$ for $q\ne q'$ and moreover the following Wiener-It\^o chaos decomposition holds
$$
L^2(\P) = \bigoplus_{q=0}^{+\infty} C_q,
$$
i.e. every random variable $F$ whose second moment is finite can be expressed as a series converging in $L^2(\P)$
\begin{equation}\label{chaos dec}
F=\sum_{q=0}^{+\infty} F[q],
\end{equation}
where $F[q] = \text{proj}(F|C_q)$ is the orthogonal projection of $F$ onto the $q$-th chaos ($F[0] = \E[F]$). 

The defect $D_\ell$, as defined in \paref{defect}, is a square integrable functional of the isonormal Gaussian process $X$ in \paref{iso}, actually it is easy to check that $|D_\ell|\le |\mathbb S^d|$ a.s. It hence admits a Wiener-It\^o chaos decomposition of the form \paref{chaos dec} (see Lemma \ref{lem chaos}).

\subsubsection{Multiple stochastic integrals}\label{multiple}

The $q$-th tensor power $L^2(\mathbb S^d)^{\otimes q}$ (resp. $q$-th symmetric power $L^2(\mathbb S^d)^{\odot q}$) of $L^2(\mathbb S^d)$ is simply $L^2((\mathbb S^d)^q, dx^q)$ (resp. $L^2_s((\mathbb S^d)^q, dx^q)$, i.e. the space of a.e. symmetric functions on $(\mathbb S^d)^q$). Let us define the (linear) operator $I_q$ for unit norm $f\in L^2(\mathbb S^d)$ as 
$$
I_q(f^{\otimes q}) := H_q(X(f))
$$
and extend it to an isometry between $L^2_s((\mathbb S^d)^q):=L^2_s((\mathbb S^d)^q, dx^q)$ equipped with the modified norm $\frac{1}{\sqrt{q!}}\| \cdot \|_{L^2_s((\mathbb S^d)^q)}$ and the $q$-th Wiener chaos $C_q$ endowed with the $L^2(\P)$-norm. 

It is well-known \cite[\S 2.7]{noupebook} that for $h\in L^2_s((\mathbb S^d)^q)$,  it holds
$$
I_q(h) = \int_{(\mathbb S^d)^q} h(x_1,x_2,\dots,x_q)\,dW(x_1) dW(x_2) \dots dW(x_q),
$$
the multiple Wiener-It\^o integral of $h$ with respect to the Gaussian measure $W$, where the domains of integration implicitly avoids diagonals. 
Therefore, $F[q]$ in \paref{chaos dec} is a stochastic multiple integral $F[q]=I_q(f_q)$, for a unique kernel $f_q\in L^2_s((\mathbb S^d)^q)$. 

From Lemma \ref{lem chaos} and \paref{eigen}, \paref{kernel}, it is immediate to check that for chaotic projections of the defect we have ($q\ge 1$)
$$
D_\ell[2q+1]= I_{2q+1}\left( \frac{J_{2q+1}}{(2q+1)!}  \int_{\mathbb S^d} f_{\ell;x}\,dx  \right).
$$

\subsubsection{Contractions}

For every $p,q\ge 1$, $f\in L^2(\mathbb S^d)^{\otimes p}, g\in L^2(\mathbb S^d)^{\otimes q}$ and $%
r=1,2,\dots, p\wedge q$, the so-called \emph{contraction} of $f$ and $g$ of
order $r$ is the element $f\otimes _{r}g\in L^2(\mathbb S^d)^{\otimes p+q-2r}$ defined as (see \cite[(B.4.7)]{noupebook})
\begin{equation}\label{contrazione}
\begin{split}
& (f\otimes _{r}g)(x_{1},\dots,x_{p+q-2r})\\
& :=\int_{(\mathbb S^d)^{r}}f(x_{1},\dots,x_{p-r},y_{1},\dots,y_{r})
g(x_{p-r+1},\dots,x_{p+q-2r},y_{1},\dots,y_{r})\,dy_{1}\dots dy_{r}.
\end{split}
\end{equation}
For $p=q=r$, we have $f\otimes _{r}g = \langle f,g \rangle_{L^2(\mathbb S^d)^{\otimes_r}}$
and for $r=0$, $f\otimes _{0}g := f\otimes g$. Denote by $f\widetilde \otimes
_{r}g$ the canonical symmetrization \cite[(B.2.1)]{noupebook} of $f\otimes_r g$. The following
multiplication formula is well-known \cite[Theorem 2.7.10]{noupebook}: for $p,q=1,2,\dots$, $f\in L^2(\mathbb S^d)^{\odot
p}, g\in L^2(\mathbb S^d)^{\odot q}$, we have
\begin{equation*}
I_p(f)I_q(g)=\sum_{r=0}^{p\wedge q} r! {\binom{p }{r}} {\binom{q }{r}}%
I_{p+q-2r}(f\widetilde \otimes _{r}g).
\end{equation*}

\subsection{Some facts about Malliavin calculus}\label{some malliavin}

In what follows, we will use standard notions and results from Malliavin calculus; we refer the reader to \cite[\S 2.3, \S 2.4]{noupebook} (from which we borrow our notation) for definitions and details.
For $q,r\ge 1$, recall that the $r$-th Malliavin derivative of a random
variable $I_{q}(f)\in C_{q}$ where $f\in L^2(\mathbb S^d)^{\odot q}$, can be identified as
the element $D^{r}I_{q}(f):\Omega \rightarrow L^2(\mathbb S^d)^{\odot r}$ given by
\begin{equation} \label{marra2}
D^{r}I_{q}(f)=\frac{q!}{(q-r)!}I_{q-r}(f),
\end{equation}%
for $r\leq q$, and $D^{r}I_{q}(f)=0$ for $r>q$. 
For notational simplicity, we shall write $D$ instead of $D^{1}$. We say
that $F$ as in (\ref{chaos dec}) belongs to the space $\mathbb{D}^{r,q}$ if ${\mathbb{\ E}}\left [|F|^{q} \right]+\dots +{
\mathbb{\ E}}\left [\Vert D^{r}F\Vert _{L^2(\mathbb S^d)^{\odot r}}^{q} \right]<+\infty$. We write
\begin{equation*}
\Vert F\Vert _{\mathbb{D}^{r,q}}:=\left( {\mathbb{\ E}}\left [|F|^{q} \right]+\dots {%
\mathbb{\ E}}\left [\Vert D^{r}F\Vert _{L^2(\mathbb S^d)^{\odot r}}^{q} \right]\right) ^{\frac{1}{q}%
}.
\end{equation*}%
It is easy to check that $F\in \mathbb{D}^{1,2}$ if and only if
\begin{equation*}
\sum_{q=1}^{\infty }q\Vert
F[q]\Vert _{L^{2}(\P )}^{2}<+\infty,
\end{equation*}
and in this case ${\mathbb{\ E}}\left [\Vert DF\Vert _{L^2(\mathbb S^d)}^{2}\right ]=\sum_{q=1}^{\infty }q\Vert
F[q]\Vert _{L^{2}(\P )}^{2}$. In particular, if $F\in L^2(\P)$ admits a finite chaotic decomposition, then it belongs to $ \mathbb{D}^{1,2}$. 
We need to introduce also the generator of the Ornstein-Uhlenbeck semigroup,
defined as
\begin{equation*}
L=-\sum_{q=0}^{\infty }q \cdot \text{proj}(\, \cdot \,| C_q),
\end{equation*}%
where $ \text{proj}(\, \cdot \,| C_q)$ is the orthogonal projection operator onto the $q$-th Wiener chaos $C_{q}$. The domain of $L$ is $\mathbb{D}^{2,2}$, equivalently the
space of Gaussian subordinated random variables $F$ such that
\begin{equation*}
\sum_{q=1}^{+\infty }q^{2}\Vert F[q]\Vert _{L^{2}(\P )}^{2}<+\infty.
\end{equation*}%
The pseudo-inverse operator of $L$ is defined as
$
L^{-1}=-\sum_{q=1}^{\infty }\frac{1}{q}\cdot \text{proj}(\, \cdot \,| C_q)
$
and satisfies, for each $F\in L^{2}(\P )$,
$
LL^{-1}F=F-{\mathbb{\ E}}[F].
$

\subsection{Fourth Moment Theorems}\label{4th mom}

\noindent Stein's method for Normal approximations and Malliavin calculus applied on Wiener chaoses lead to so-called Fourth Moment Theorems \cite{nou-pe2}, \cite[Chapters 5, 6]{noupebook}. Briefly, for a sequence of centered random variables $(F_n)_{n\ge 1}$ living in a fixed Wiener chaos $C_q$, such that $\Var(F_n)=1$ for all $n=1,2,\dots$, convergence in law to a standard Gaussian random variable is \emph{equivalent} to the sequence of fourth cumulants $(\text{cum}_4(F_n))_n$ converging to $0$ \cite[Theorem 5.2.7]{noupebook}. Moreover, the quantity $|\text{cum}_4(F_n) |$ gives information \cite[Theorem 5.2.6]{noupebook} on the rate of convergence to the limiting distribution in various probability metrics, the Wassertein distance \paref{wass} included. 
The contribution of $\text{cum}_4(F_n)$ can be expressed in terms of contractions \paref{contrazione} of the kernel $f_{q,n}$, where $F_n = I_q(f_{q,n})$ (see \cite[Lemma 5.2.4]{noupebook}). For a sequence of random vectors whose components lie in different chaoses, convergence to a multivariate Gaussian is equivalent to componentwise convergence to the normal distribution \cite{peccatitudor}. 

We will properly state these results for $H=L^2(\mathbb S^d)$ but they hold in much more generality  \cite[Chapters 5, 6]{noupebook}. Here and in what follows, $\text{d}_{TV}$ and $\text{d}_{K}$ shall denote the Total Variation and Kolmogorov distance, respectively (see \cite[\S C.2]{noupebook} e.g.). 
\begin{proposition}[Theorem 5.1.3 in \cite{noupebook}]
\label{BIGnourdinpeccati} 
Let $F\in
\mathbb{D}^{1,2}$ be such that $\mathbb{E}[F]=0,$ $\mathbb{E}[F^{2}]=\sigma 
^{2}<+\infty$ and $Z\sim \mathcal{N}(0,\sigma^2)$ a centered Gaussian random variable with variance $\sigma^2$. Then we have
\begin{equation*}
\text{d}_{W}(F,Z)\leq \sqrt{\frac{2}{\sigma ^{2}\,\pi }}\mathbb{E}%
\left [\left\vert \sigma ^{2}-\langle DF,-DL^{-1}F\rangle _{H}\right\vert \right ].
\end{equation*}%
Also, assuming in addition that $F$ has a density%
\begin{equation*}
\begin{split}
\text{d}_{TV}(F,Z) &\le \frac{2}{\sigma ^{2}}\mathbb{E}\left [\left\vert
\sigma ^{2}-\langle DF,-DL^{-1}F\rangle _{H}\right\vert \right], \\
\text{d}_{K}(F,Z) &\le \frac{1}{\sigma ^{2}}\mathbb{E}\left [\left\vert
\sigma ^{2}-\langle DF,-DL^{-1}F\rangle _{H}\right\vert \right].
\end{split}
\end{equation*}%
\end{proposition}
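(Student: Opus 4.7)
The plan is to combine Stein's method for normal approximation with the Malliavin integration-by-parts formula, following the now-classical Nourdin-Peccati strategy. The starting point is Stein's characterization: $Z\sim \mathcal N(0,\sigma^2)$ if and only if $\E[\sigma^2 f'(Z)-Zf(Z)]=0$ for every $f$ in a sufficiently rich class. Equivalently, for each test function $h$ one associates the Stein equation
$$
\sigma^2 f'(x)-xf(x)=h(x)-\E[h(Z)],
$$
whose bounded solution $f_h$ satisfies explicit derivative estimates depending on the regularity class of $h$. Specifically, when $h\in \text{Lip}_1$ one has $\|f_h'\|_\infty\le \sqrt{2/(\sigma^2\pi)}$; when $h$ is the indicator of a half-line (Kolmogorov case) or of an arbitrary Borel set (Total Variation case), sharper bounds on $\sigma^2 f_h'$ are available, of the form $\|\sigma^2 f_h'\|_\infty\le 1$ and $\|\sigma^2 f_h'\|_\infty\le 2$ respectively. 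These Stein bounds are the first ingredient.

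The second ingredient is the Malliavin integration-by-parts identity. For $F\in \mathbb D^{1,2}$ with $\E[F]=0$, I would use the representation $F=LL^{-1}F=-\delta(DL^{-1}F)$, where $\delta$ is the divergence (Skorohod) operator adjoint to $D$. Combined with the chain rule $D(\varphi(F))=\varphi'(F)\,DF$, valid for smooth enough $\varphi$, and the duality $\E[\langle DG,u\rangle_H]=\E[G\,\delta(u)]$, this yields the key identity
$$
\E[F\,\varphi(F)] = \E\bigl[\varphi'(F)\,\langle DF,-DL^{-1}F\rangle_H\bigr].
$$
This is precisely the bridge that allows one to transfer Stein's deterministic criterion into a Malliavin-theoretic statement.

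Applying this identity with $\varphi=f_h$ and rearranging gives
$$
\E[h(F)]-\E[h(Z)] = \E\bigl[\sigma^2 f_h'(F)-Ff_h(F)\bigr] = \E\bigl[f_h'(F)\bigl(\sigma^2-\langle DF,-DL^{-1}F\rangle_H\bigr)\bigr].
$$
Taking absolute values, bounding $|f_h'(F)|$ by the Stein-bound constant, and finally taking the supremum over the relevant class of test functions $h$ (Lipschitz with constant $1$ for Wasserstein, half-line indicators for Kolmogorov, Borel indicators for Total Variation) produces the three inequalities. For the Wasserstein bound the constant $\sqrt{2/(\sigma^2\pi)}$ appears directly; for Kolmogorov and Total Variation one absorbs the factor $\sigma^2$ from the Stein bound into the prefactor, yielding $1/\sigma^2$ and $2/\sigma^2$ respectively.

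The main obstacle is the regularity issue, which is why the density hypothesis on $F$ is needed for the Total Variation and Kolmogorov cases: the test functions $h$ there are not smooth, so the Stein solution $f_h$ fails to be $C^1$ and one cannot directly apply the chain rule in $\mathbb D^{1,2}$. The standard remedy is to approximate $h$ by a sequence of smooth functions, apply the identity at each level, and pass to the limit using the fact that $F$ has a density (so that the exceptional set of non-differentiability has zero probability under the law of $F$). A secondary technical point is verifying that $DL^{-1}F$ is well-defined as an $H$-valued random variable, which follows from $F\in \mathbb D^{1,2}$ together with the spectral definition of $L^{-1}$ on the chaotic components. Once these approximation and measurability issues are handled, the three inequalities follow from the same one-line computation.
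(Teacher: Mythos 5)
Your proposal is correct and is precisely the standard Stein--Malliavin argument: Stein's equation with the derivative bounds $\sqrt{2/(\sigma^2\pi)}$, $1/\sigma^2$, $2/\sigma^2$ for the Lipschitz, half-line-indicator and Borel-indicator classes, combined with the identity $\E[F\varphi(F)]=\E[\varphi'(F)\langle DF,-DL^{-1}F\rangle_H]$ obtained from $F=-\delta(DL^{-1}F)$ and the chain rule, with the density hypothesis handling the non-smooth test functions. The paper itself offers no proof of this proposition --- it is imported verbatim as Theorem 5.1.3 of the cited Nourdin--Peccati monograph, whose proof is exactly the one you sketch --- so there is nothing to compare beyond noting agreement.
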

In the special case where $F=I_{q}(f)$ for $f\in L^2(\mathbb S^d)^{\odot q}$,
then from \cite[Theorem 5.2.6]{noupebook}, 
\begin{equation} \label{casoparticolare}
\mathbb{E}\left [\left\vert \sigma ^{2}-\langle DF,-DL^{-1}F\rangle
_{H}\right\vert \right]\leq \sqrt{\frac{1}{q^{2}}\sum_{r=1}^{q-1}r^{2}r!^{2}{%
\binom{q}{r}}^{4}(2q-2r)!\Vert f\widetilde{\otimes }_{r}f\Vert _{H^{\otimes
2q-2r}}^{2}} . 
\end{equation}
Note that in (\ref{casoparticolare}) we can replace $\Vert f\widetilde{%
\otimes }_{r}f\Vert _{H^{\otimes 2q-2r}}^{2}$ with the norm of the unsymmetryzed
contraction $\Vert f\otimes _{r}f\Vert _{H^{\otimes 2q-2r}}^{2}$, since $$\Vert f\widetilde{\otimes }_{r}f\Vert _{H^{\otimes
2q-2r}}^{2}\leq \Vert f\otimes _{r}f\Vert _{H^{\otimes 2q-2r}}^{2}$$ by the
triangle inequality.

\section{Proof of Proposition \ref{prop var}}\label{proofvar}

From Lemma \ref{lem chaos} we have, by the orthogonality property of chaotic projections and \paref{producthermite}
\begin{equation}\label{chainvar}
\begin{split}
\Var(D_\ell) &= \sum_{q=1}^{+\infty} \left(\frac{J_{2q+1}}{(2q+1)!}\right)^2 \int_{\mathbb S^d} \int_{\mathbb S^d}\E[H_{2q+1}(T_\ell(x)) H_{2q+1}(T_\ell(y))]\,dxdy\cr
&=\sum_{q=1}^{+\infty} \frac{J_{2q+1}^2}{(2q+1)!} \int_{\mathbb S^d} \int_{\mathbb S^d}G_{\ell;d}(\cos d(x,y))^{2q+1}\,dxdy.
\end{split}
\end{equation}
The isotropy property of the integrand function in the r.h.s. of \paref{chainvar} and  the choice of standard coordinates on the hypersphere allow to write
\begin{equation}\label{vaar}
\Var(D_\ell)=\sum_{q=1}^{+\infty} \frac{J_{2q+1}^2}{(2q+1)!} |\mathbb S^d||\mathbb S^{d-1}| \int_{0}^\pi G_{\ell;d}(\cos \vartheta)^{2q+1}(\sin \vartheta)^{d-1}\,d\vartheta,
\end{equation}
which coincides with \paref{var introtilde}. 
We are now ready to give the proof of Proposition \ref{prop var},  inspired by 
the proofs of \cite[Proposition 4.2, Theorem 1.2]{def}.

\begin{proof}[Proof of Proposition \ref{prop var}]
Let us bear in mind \paref{var intro}. In \cite[Proposition 2.2]{maudom} it has been proven that, as $\ell\to +\infty$,
\begin{equation}\label{limG}
\lim_{\ell\to +\infty} \ell^d \int_{0}^{\pi/2} G_{\ell;d}(\cos \vartheta)^{2q+1}(\sin \vartheta)^{d-1}\,d\vartheta= c_{2q+1;d},
\end{equation}
where $c_{2q+1;d}$ is given by
$
c_{2q+1;d}:=\int_{0}^{+\infty
} \widetilde J_{d}(\psi )^{2q+1}\psi ^{d-1}d\psi
$,
 $\widetilde J_d$ being defined as in \paref{tildej}. 
 Therefore, as anticipated in \S\ref{on the var} we would expect that, asymptotically,
\begin{equation}\label{guess}
\Var(D_\ell)\sim  \frac{C_d}{\ell^d},
\end{equation}
where the leading constant $C_d$ (uniquely depending on $d$) satisfies
\begin{equation}\label{guess1}
C_d = 2|\mathbb S^d||\mathbb S^{d-1}|\sum_{q=1}^{+\infty} \frac{J_{2q+1}^2}{(2q+1)!} c_{2q+1;d}.
\end{equation}
Before proving \paref{guess} that coincides with \paref{vareq},
let us check that $C_d>0$, assuming \paref{guess1} is true.
Actually, the r.h.s. of \paref{guess1} is a series of  nonnegative terms and from \cite[p. 217]{andrews} we have
\begin{equation*}
c_{3;d} = \left(2^{\frac{d}{2} - 1}\left (\frac{d}2-1 \right)!\right)^3
\frac{3^{\frac{d}{2} -\frac32}}{2^{3\left (\frac{d}2-1 \right)-1}\sqrt \pi\,
\Gamma \left ( \frac{d}{2} -\frac12   \right )}>0\ .
\end{equation*}
The previous argument proves also \paref{disug}. 

Moreover, assuming \paref{guess1} is true and keeping in mind \paref{tildej}, we get
\begin{equation}\label{chains}
\begin{split}
C_d  &= 2|\mathbb S^d||\mathbb S^{d-1}|\sum_{q=1}^{+\infty} \frac{J_{2q+1}^2}{(2q+1)!} 
\int_{0}^{+\infty
}\widetilde J_{d}(\psi)^{2q+1}\,\psi^{d-1}\,d\psi\cr
&= 2|\mathbb S^d||\mathbb S^{d-1}|\int_{0}^{+\infty}\sum_{q=1}^{+\infty} \frac{J_{2q+1}^2}{(2q+1)!} \,
\widetilde J_{d}(\psi)^{2q+1}\,\psi^{d-1}\, d\psi\cr
&=\frac{4}{\pi}|\mathbb S^d||\mathbb S^{d-1}|\int_{0}^{+\infty}
\left( \arcsin \left (\widetilde J_{d}(\psi)\right)              -  \widetilde J_{d}(\psi) \right)\psi^{d-1}\,d\psi.
\end{split}
\end{equation}
Actually, it is readily checked that the sequence 
$
\left(J_{2q+1}^2/(2q+1)!\right)_q
$
gives the coefficients in the Taylor expansion for the arcsin function. Actually, since $H_{2q}(0) = (-1)^q (2q-1)!!$ (where by $(2q-1)!!$ we mean the product of all positive odd integers $\le 2q-1$),
\begin{equation}\label{arcsincoeff}
\frac{J_{2q+1}^2}{(2q+1)!} = \frac{2}{\pi} \frac{(2q)!}{4^q (q!)^2 (2q+1)}.
\end{equation}
To justify the exchange
of the  integration and summation order in \paref{chains}, we consider, for $m\in\mathbb N$, $m>1$, the finite summation
$$
\sum_{q=1}^{m} \frac{J_{2q+1}^2}{(2q+1)!}  \int_0^{+\infty}
\widetilde J_{d}(\psi)^{2q+1}\psi^{d-1}\, d\psi
$$ and 
using the asymptotics
\begin{equation}\label{asymparcsin}
\frac{J_{2q+1}^2}{(2q+1)!} \sim \frac{c}{q^{3/2}}
\end{equation}
for some $c>0$ (Stirling's formula applied to \paref{arcsincoeff}), and the behavior of Bessel functions for large argument \cite[\S 1.7]{szego} to bound the contributions of tails, we take the limit $m\to +\infty$.

Let us now formally prove the asymptotic result for the variance \paref{guess}.
Note that, since the family $\left(G_{\ell;d}\right)_\ell$ is uniformly bounded by $1$, 
\begin{equation}\label{bellazio}
\begin{split}
& \sum_{q=m+1}^{+\infty} \frac{J_{2q+1}^2}{(2q+1)!}  \int_{0}^{\pi/2}
\left | G_{\ell;d}(\cos \theta) \right |^{2q+1}(\sin \theta)^{d-1}\,d\theta\cr
&\le   \sum_{q=m+1}^{+\infty} \frac{J_{2q+1}^2}{(2q+1)!}  \int_{0}^{\pi/2}
\left | G_{\ell;d}(\cos \theta) \right |^{5}(\sin \theta)^{d-1}\,d\theta
\ll \frac{1}{\ell^d} \sum_{q=m+1}^{+\infty} \frac{1}{q^{3/2}} \cr 
&\ll \frac{1}{\sqrt{m}\, \ell^d}\ .\end{split}
\end{equation}
Therefore, for $m=m(\ell)$ to be chosen
$$
\Var(D_\ell)=2|\mathbb S^d||\mathbb S^{d-1}| \sum_{q=1}^{m} \frac{J_{2q+1}^2}{(2q+1)!}  \int_{0}^{\pi/2}
G_{\ell;d}(\cos \theta)^{2q+1}(\sin \theta)^{d-1}\,d\theta + O\left (   \frac{1}{\sqrt{m}\, \ell^d} \right).
$$
From \paref{limG}, we can write
$$
\Var(D_\ell) =C_{d,m} \cdot \frac{1}{\ell^d} + o\left (\ell^{-d} \right) + O\left ( \frac{1}{\sqrt{m}\, \ell^d} \right ),
$$
where
$$
C_{d,m}:=2|\mathbb S^d||\mathbb S^{d-1}| \sum_{q=1}^{m} \frac{J_{2q+1}^2}{(2q+1)!} c_{2q+1;d}.
$$
Now since $C_{d,m}\to C_d$ as $m\to +\infty$, we can conclude.
\end{proof}

\section{Proof of Theorem \ref{th qclt} assuming Lemma \ref{lem}}\label{proofqclt}

Bearing in mind \paref{chaosexpintro}, let us set for simplicity of notation
$$
h_{\ell;2q+1,d}:=\int_{\mathbb S^d} H_{2q+1}(T_\ell(x))\,dx.
$$
For $m\in \N, m>1$ to be chosen later, we define the ``truncated" defect as 
$$
D_\ell^m:=\sum_{q=1}^{m-1} \frac{J_{2q+1}}{(2q+1)!}\,h_{\ell;2q+1,d}.
$$
We still have to introduce some more notation. As usual, $Z\sim \mathcal N(0,1)$, wherease $Z_{\ell,m}$ shall denote a centered Gaussian random variable with variance 
$\sigma^2_{\ell,m}:= \frac{\Var(D^m_\ell)}{\Var(D_\ell)}$. 
We are now ready  to give the proof of the quantitative CLT for the defect on the hypersphere in any dimension. The technique we adopt was used to prove rate of convergence to the Gaussian distribution in other circumstances (e.g. \cite{vale3,pham}). 

\begin{proof}[Proof of Theorem \ref{th qclt}]
By the triangle inequality 
\begin{equation*}
\begin{split}
&\text{d}_W \left( \frac{D_\ell}{\sqrt{\Var(D_\ell)}},  Z \right ) \cr
&\le \text{d}_W \left( \frac{D_\ell}{\sqrt{\Var(D_\ell)}},  \frac{D^m_\ell}{\sqrt{\Var(D_\ell)}}    \right ) + \text{d}_W \left( \frac{D^m_\ell}{\sqrt{\Var(D_\ell)}}, Z_{\ell,m}    \right ) + \text{d}_W \left( Z_{\ell,m} , Z   \right ).
\end{split}
\end{equation*}
Let us denote $I_1 := \text{d}_W \left( \frac{D_\ell}{\sqrt{\Var(D_\ell)}},  \frac{D^m_\ell}{\sqrt{\Var(D_\ell)}}    \right )$, $I_2:=\text{d}_W \left( \frac{D^m_\ell}{\sqrt{\Var(D_\ell)}}, Z_{\ell,m}    \right )$ and $I_3:=\text{d}_W \left( Z_{\ell,m} , Z   \right )$.

\smallskip

\emph{\large Bounding the contribution of $I_1$.}
By definition of Wasserstein distance \paref{wass} we have 
\begin{equation*}
\begin{split}
I_1^2 = \text{d}_W \left( \frac{D_\ell}{\sqrt{\Var(D_\ell)}},  \frac{D^m_\ell}{\sqrt{\Var(D_\ell)}}    \right )^2 &\le  \E\left [ \left ( \frac{D_\ell}{\sqrt{\Var(D_\ell)}} -  \frac{D^m_\ell}{\sqrt{\Var(D_\ell)}}          \right )^2           \right]      \cr
&=\frac{1}{\Var(D_\ell)}  \E\left [ \left ( D_\ell - D_\ell^m          \right )^2           \right]    \cr
& = \frac{1}{\Var(D_\ell)}
\sum_{q=m}^{+\infty}  \frac{J_{2q+1}^2}{((2q+1)!)^2}\Var\left( h_{\ell;2q+1,d}\right),
\end{split}
\end{equation*}
where the last equality still follows from the othogonality property of  chaotic projections.
From \paref{bellazio} and \paref{vareq}, one infers that
\begin{equation}\label{bound1}
\text{d}_W \left( \frac{D_\ell}{\sqrt{\Var(D_\ell)}},  \frac{D^m_\ell}{\sqrt{\Var(D_\ell)}}    \right ) =O\left (\frac{1}{^4\sqrt{m}} \right ).
\end{equation}

\smallskip

\emph{\large Bounding the contribution of $I_2$.} We will use some technical results involving Stein-Malliavin approximation methods \cite[Chapters 5, 6]{noupebook}. Let us set, from now on, $H:=L^2(\mathbb S^d)$.

From Proposition \ref{BIGnourdinpeccati} and \paref{casoparticolare} we deduce
\begin{equation}\label{eee}
\begin{split}
 &\text{d}_W \left( \frac{D^m_\ell}{\sqrt{\Var(D_\ell)}},  Z_{\ell,m}   \right ) \cr
&\le \sqrt{\frac{2}{\pi}}\sqrt{\frac{\Var(D_\ell)}{\Var(D_\ell^m)}}\E \left [ \left |\frac{\Var(D^m_\ell)}{\Var(D_\ell)} - \left \langle D  \frac{D^m_\ell}{\sqrt{\Var(D_\ell)}} , -DL^{-1}  \frac{D^m_\ell}{\sqrt{\Var(D_\ell)}} \right \rangle_H     \right |    \right ]\cr
&=\sqrt{\frac{2}{\pi}}\sqrt{\frac{\Var(D_\ell)}{\Var(D_\ell^m)}}\frac{1}{\Var(D_\ell)}\E \left [ \left |\Var(D^m_\ell) - \left \langle D  D^m_\ell , -DL^{-1}  D^m_\ell \right \rangle_H     \right |    \right ].
\end{split}
\end{equation}
Since
\begin{equation*}
\Var(D^m_\ell) = \sum_{q=1}^{m-1} \frac{J_{2q+1}^2}{((2q+1)!)^2}\Var(h_{\ell;2q+1,d}),
\end{equation*}
we can write for the r.h.s. of \paref{eee}
\begin{equation}\label{1}
\begin{split}
&\sqrt{\frac{2}{\pi}}\sqrt{\frac{\Var(D_\ell)}{\Var(D_\ell^m)}}\frac{1}{\Var(D_\ell)}\E \left [ \left |\Var(D^m_\ell) - \left \langle D  D^m_\ell , -DL^{-1}  D^m_\ell \right \rangle_H     \right |    \right ] \cr
&\le \sqrt{\frac{2}{\pi}}\sqrt{\frac{\Var(D_\ell)}{\Var(D_\ell^m)}}\frac{1}{\Var(D_\ell)}\sum_{q=1}^{m-1}\frac{J_{2q+1}^2}{((2q+1)!)^2} \E \left [ \left |\Var(h_{\ell;2q+1,d}) - \left \langle D  h_{\ell;2q+1,d} , -DL^{-1}  h_{\ell;2q+1,d} \right \rangle_H     \right |    \right ]\cr
&+\sqrt{\frac{2}{\pi}}\sqrt{\frac{\Var(D_\ell)}{\Var(D_\ell^m)}}\frac{1}{\Var(D_\ell)}\sum_{q=1}^{m-1}\frac{J_{2q+1}}{(2q+1)!} \sum_{q\ne p}\frac{J_{2p+1}}{(2p+1)!}\E \left [ \left | \left \langle D  h_{\ell;2q+1,d} , -DL^{-1}  h_{\ell;2p+1,d} \right \rangle_H     \right |    \right ] \cr
&\le \sqrt{\frac{2}{\pi}}\sqrt{\frac{\Var(D_\ell)}{\Var(D_\ell^m)}}\frac{1}{\Var(D_\ell)}\sum_{q=1}^{m-1}\frac{J_{2q+1}^2}{((2q+1)!)^2} \sqrt{\Var \left ( \left \langle D  h_{\ell;2q+1,d} , -DL^{-1}  h_{\ell;2q+1,d} \right \rangle_H         \right )} \cr
&+\sqrt{\frac{2}{\pi}}\sqrt{\frac{\Var(D_\ell)}{\Var(D_\ell^m)}}\frac{1}{\Var(D_\ell)}\sum_{q=1}^{m-1}\frac{J_{2q+1}}{(2q+1)!} \sum_{q\ne p}\frac{J_{2p+1}}{(2p+1)!}\sqrt{\E \left [  \left \langle D  h_{\ell;2q+1,d} , -DL^{-1}  h_{\ell;2p+1,d} \right \rangle_H ^2 \right ]},
\end{split}
\end{equation}
where for the last step we used \cite[Theorem 2.9.1]{noupebook} and  Cauchy-Schwarz inequality. 
Now, Lemma \ref{comevale} which is collected in the Appendix \ref{valeapp} and whose proof relies in particular on \cite[Proposition 4.2, Proposition 4.3]{maudom} and Lemma \ref{lem}, allows one to write, for some $C>0$,
\begin{equation*}
\begin{split}
&\Var \left (\left \langle D  h_{\ell;2q+1,d} , -DL^{-1}  h_{\ell;2q+1,d} \right \rangle_H         \right )\le C (2q+1)^2 ((2q)!)^2 3^{4q} R_{\ell;d},\cr
&\E \left [  \left \langle D  h_{\ell;2q+1,d} , -DL^{-1}  h_{\ell;2p+1,d} \right \rangle_H ^2 \right ]
\le C (2q+1)^2(2q)! (2p)! 3^{2q+2p} R_{\ell;d},
\end{split}
\end{equation*}
where 
\begin{equation*}
R_{\ell;2} := \frac{\log\ell}{\ell^{9/2}}\quad \text{ and for } d>2 \quad  R_{\ell;d}:=\frac{1}{\ell^{2d +(d-1)/2}}.
\end{equation*}
Therefore, Lemma \ref{comevale}, \paref{eee} and \paref{1} give
\begin{equation*}
\begin{split}
 &\text{d}_W \left( \frac{D^m_\ell}{\sqrt{\Var(D_\ell)}},  Z_{\ell,m}   \right ) \cr
&\le \sqrt{\frac{2}{\pi}}\sqrt{\frac{\Var(D_\ell)}{\Var(D_\ell^m)}}\frac{\sqrt{R_{\ell;d}}}{\Var(D_\ell)}\sum_{q=1}^{m-1}\frac{J_{2q+1}^2}{((2q+1)!)^2} \sqrt{C (2q+1)^2 ((2q)!)^2 3^{4q} } \cr
&+\sqrt{\frac{2}{\pi}}\sqrt{\frac{\Var(D_\ell)}{\Var(D_\ell^m)}}\frac{\sqrt{R_{\ell;d}}}{\Var(D_\ell)}\sum_{q=1}^{m-1}\frac{J_{2q+1}}{(2q+1)!} \sum_{q\ne p}\frac{J_{2p+1}}{(2p+1)!}\sqrt{C (2q+1)^2(2q)! (2p)! 3^{2q+2p} }.
\end{split}
\end{equation*}
Now 
\begin{equation*}
\sum_{q=1}^{m-1}\frac{J_{2q+1}^2}{((2q+1)!)^2} \sqrt{ (2q+1)^2 ((2q)!)^2 3^{4q}}=\sum_{q=1}^{m-1}\frac{J_{2q+1}^2}{(2q+1)!} 3^{2q}
\end{equation*}
and by \paref{asymparcsin} one deduces 
\begin{equation}
\sum_{q=1}^{m-1}\frac{J_{2q+1}^2}{((2q+1)!)^2} \sqrt{ (2q+1)^2 ((2q)!)^2 3^{4q}}\le C 3^{2m}
\end{equation}
and analogously
\begin{equation}
\sum_{q=1}^{m-1}\frac{J_{2q+1}}{(2q+1)!} \sum_{q\ne p}\frac{J_{2p+1}}{(2p+1)!}\sqrt{(2q+1)^2(2q)! (2p)! 3^{2q+2p} }\le C 3^{2m},
\end{equation}
for some positive constant $C$. Finally, thanks to Proposition \ref{prop var} and since $\frac{1}{\sigma^2_{\ell;m}}\le \frac{C}{1 - m^{-1/2}}$ (by \paref{bellazio}), we can write
\begin{equation}\label{bound2}
\begin{split}
 \text{d}_W \left( \frac{D^m_\ell}{\sqrt{\Var(D_\ell)}}, Z_{\ell,m}   \right )
& \le C  3^{2m}\sqrt{\frac{1}{1 - m^{-1/2}}} \ell^d\sqrt{R_{\ell;d}}.
\end{split}
\end{equation}

\smallskip 

\emph{\large Bounding the contribution of $I_3$.}
Proposition 3.6.1 in \cite{noupebook} and \paref{bellazio} give 
\begin{equation}\label{bound3}
 \text{d}_W \left( Z_{\ell,m} , Z   \right )\le C \left| \frac{\Var(D_\ell - D_\ell^m)}{\Var(D_\ell)}\right |\ll \frac{1}{\sqrt m}.
\end{equation}

\smallskip

\emph{\large Optimazing on $m$.}
Summing up the three bounds \paref{bound1}, \paref{bound2} and \paref{bound3} with the choice of the speed 
$
m\asymp \log \ell^\alpha
$, for some $\alpha\in (0,1/4)$, 
it is easy to note that, in any dimension, the dominant term is given by \paref{bound1}. This concludes the proof. 
\end{proof}

\section{Proofs of Lemma \ref{lem} and Lemma \ref{lemCG}}\label{345}

We shall use sometimes in this section the shorthand notation
\begin{equation*}
\sum_{m}:=\sum_{m=1}^{n_{\ell;d}}\qquad\text{and}\qquad \sum_{m_1,m_2}:=\sum_{m_1,m_2=1}^{n_{\ell;d}}.
\end{equation*}
Moreover we will drop the dependence of $d$ in \paref{defcg}, for brevity. 

\begin{proof}[Proof of Lemma \ref{lem} assuming Lemma \ref{lemCG}]
Let us study the following multiple integral which gives the major contribution to the $4$-th cumulant of $\int_{\mathbb S^d} H_3(T_\ell(x))\,dx$, as already stated in \S\ref{on asymp}:
\begin{equation*}\label{4cum}
\int_{(\mathbb S^d)^4} G_{\ell;d}(\cos d(w,z))G_{\ell;d}(\cos d(w,w'))^2G_{\ell;d}(\cos d(w',z'))G_{\ell;d}(\cos d(z',z))^2\,dw dz dw' dz'.
\end{equation*}
The addition formula for Gegenbauer polynomials \paref{addformula} allows one to write
\begin{equation}\label{h1}
\begin{split}
&\int_{\mathbb S^d} G_{\ell;d}(\cos d(w,z))G_{\ell;d}(\cos d(w,w'))^2\,dw  \cr
&= \left(\frac{|\mathbb S^d|}{n_{\ell;d}}\right)^3 \sum_{m_1,m_2',m_3'=1}^{n_{\ell;d}} Y_{\ell,m_1}(z)Y_{\ell,m_2'}(w') Y_{\ell,m_3'}(w')  \int_{\mathbb S^d} Y_{\ell,m_1}(w)  Y_{\ell,m_2'}(w) Y_{\ell,m_3'}(w)\,dw\cr
&=\left(\frac{|\mathbb S^d|}{n_{\ell;d}}\right)^3 \sum_{m_1,m_2',m_3'=1}^{n_{\ell;d}} Y_{\ell,m_1}(z)Y_{\ell,m_2'}(w') Y_{\ell,m_3'}(w') {\mathcal G}^{\ell,m_1}_{\ell,m_2',\ell,m_3'}
\end{split}
\end{equation}
where ${\mathcal G}^{\ell,m_1}_{\ell,m_2',\ell,m_3'}$ has been defined in \paref{defcg}. 
The same argument applied in order to deduce \paref{h1} gives
\begin{equation}\label{h2}
\begin{split}
&\int_{\mathbb S^d} G_{\ell;d}(\cos d(w',z'))G_{\ell;d}(\cos d(z',z))^2\,dz'\cr
&= \left(\frac{|\mathbb S^d|}{n_{\ell;d}}\right)^3 \sum_{m_1',m_2,m_3=1}^{n_{\ell;d}} Y_{\ell,m_1'}(w')Y_{\ell,m_2}(z) Y_{\ell,m_3}(z) {\mathcal G}^{\ell,m_1'}_{\ell,m_2,\ell,m_3}.
\end{split}
\end{equation}
We are thus left with
\begin{equation}\label{core}
\begin{split}
&\int_{(\mathbb S^d)^4} G_{\ell;d}(\cos d(w,z))G_{\ell;d}(\cos d(w,w'))^2G_{\ell;d}(\cos d(w',z'))G_{\ell;d}(\cos d(z',z))^2\,dw dz dw' dz'\cr
&=\left(\frac{|\mathbb S^d|}{n_{\ell;d}}\right)^6 \sum_{m_1,m_2,m_3,m_1',m_2',m_3'=1}^{n_{\ell;d}}{\mathcal G}^{\ell,m_1}_{\ell,m_2',\ell,m_3'}{\mathcal G}^{\ell,m_1'}_{\ell,m_2,\ell,m_3}\times \cr
&\times \int_{(\mathbb S^d)^2}  Y_{\ell,m_1'}(w')Y_{\ell,m_2}(z) Y_{\ell,m_3}(z) Y_{\ell,m_1}(z)Y_{\ell,m_2'}(w') Y_{\ell,m_3'}(w')\,dzdw'\cr
&=\left(\frac{|\mathbb S^d|}{n_{\ell;d}}\right)^6 \sum_{m_1,m_2,m_3,m_1',m_2',m_3'=1}^{n_{\ell;d}}{\mathcal G}^{\ell,m_1}_{\ell,m_2',\ell,m_3'}{\mathcal G}^{\ell,m_1'}_{\ell,m_2,\ell,m_3}{\mathcal G}^{\ell,m_1}_{\ell,m_2,\ell,m_3}{\mathcal G}^{\ell,m_1'}_{\ell,m_2',\ell,m_3'}.
\end{split}
\end{equation}

 Lemma \ref{lemCG} states that
\begin{equation}\label{ciao}
 \sum_{m_2,m_3}{\mathcal G}^{\ell,m_1'}_{\ell,m_2,\ell,m_3}{\mathcal G}^{\ell,m_1}_{\ell,m_2,\ell,m_3}= g_{\ell;d}\,\delta_{m_1}^{m_1'},
\end{equation}
where $$g_{\ell;d} := \frac{(n_{\ell;d})^2}{|\mathbb S^d|} \frac{|\mathbb S^{d-1}| }{|\mathbb S^{d}|}\int_{-1}^1 G_{\ell;d}(t)^3 \left(\sqrt{1-t^2}\right)^{d-2}\,dt.$$ 
Plugging \paref{ciao} into \paref{core} and applying once more Lemma \ref{lemCG} we find 

\begin{equation}\label{moltobello}
\begin{split}
&\left(\frac{|\mathbb S^d|}{n_{\ell;d}}\right)^6 \sum_{m_1,m_2,m_3,m_1',m_2',m_3'=1}^{n_{\ell;d}}{\mathcal G}^{\ell,m_1}_{\ell,m_2',\ell,m_3'}{\mathcal G}^{\ell,m_1'}_{\ell,m_2,\ell,m_3}{\mathcal G}^{\ell,m_1}_{\ell,m_2,\ell,m_3}{\mathcal G}^{\ell,m_1'}_{\ell,m_2',\ell,m_3'}\cr
&=\left(\frac{|\mathbb S^d|}{n_{\ell;d}}\right)^6 g_{\ell;d}\sum_{m_1,m_2',m_3'}{\mathcal G}^{\ell,m_1}_{\ell,m_2',\ell,m_3'}{\mathcal G}^{\ell,m_1}_{\ell,m_2',\ell,m_3'}\cr
&=\left(\frac{|\mathbb S^d|}{n_{\ell;d}}\right)^6  g_{\ell;d}\sum_{m_1}\underbrace{\sum_{m_2',m_3'}{\mathcal G}^{\ell,m_1}_{\ell,m_2',\ell,m_3'}{\mathcal G}^{\ell,m_1}_{\ell,m_2',\ell,m_3'}}_{=g_{\ell;d}}\cr
&=\left(\frac{|\mathbb S^d|}{n_{\ell;d}}\right)^6 \left (g_{\ell;d}\right )^2 n_{\ell;d} =|\mathbb S^d|^6 \left(\frac{1}{n_{\ell;d}}\right)^5 \left (g_{\ell;d}\right )^2.
\end{split}
\end{equation}
From \cite[Proposition 2.2]{maudom}, it is readily checked that $g_{\ell;d}\asymp \ell^{d-2}$. Hence \paref{moltobello} gives
\begin{equation*}
\text{cum}_4\left( \int_{\mathbb S^d} H_3(T_\ell(x))\,dx  \right)\ll  \frac{1}{\ell^{3d-1}}
\end{equation*}
and from Proposition \ref{BIGnourdinpeccati} and \paref{casoparticolare}, bearing in mind \paref{limG}, we have 
\begin{equation*}
\begin{split}
\text{d}_{\mathcal D}\left( \frac{\int_{\mathbb S^d} H_3(T_\ell(x))\,dx}{\sqrt{\Var\left( \int_{\mathbb S^d} H_3(T_\ell(x))\,dx  \right)}}, Z \right) &= O\left( \sqrt{\frac{\text{cum}_4\left( \int_{\mathbb S^d} H_3(T_\ell(x))\,dx  \right)}{\Var\left( \int_{\mathbb S^d} H_3(T_\ell(x))\,dx  \right)^2}}   \right)=O\left( \sqrt{\frac{1}{\ell^{d-1}}}   \right).
\end{split}
\end{equation*}
\end{proof}

\subsection{Proof of Lemma \ref{lemCG}}\label{CG}
\begin{proof}[Proof]
Since by definition 
\begin{equation*}
{\mathcal G}^{\ell,M}_{\ell,m_1,\ell,m_2} = \int_{\mathbb S^d}  Y_{\ell,m_1}(x)Y_{\ell,m_2}(x)Y_{\ell,M}(x)\,dx, 
\end{equation*}
we can write
\begin{equation}\label{chainbella}
\begin{split}
&\sum_{m_1,m_2} {\mathcal G}^{\ell,M}_{\ell,m_1,\ell,m_2}{\mathcal G}^{\ell,M'}_{\ell,m_1,\ell,m_2}\cr
&= \sum_{m_1,m_2} \int_{\mathbb S^d}  Y_{\ell,m_1}(x)Y_{\ell,m_2}(x)Y_{\ell,M}(x)\,dx \int_{\mathbb S^d}  Y_{\ell,m_1}(y)Y_{\ell,m_2}(y)Y_{\ell,M'}(y)\,dy\cr
&=\int_{\mathbb S^d} \int_{\mathbb S^d} dx dy  Y_{\ell,M}(x) Y_{\ell,M'}(y)\sum_{m_1} Y_{\ell,m_1}(x) Y_{\ell,m_1}(y)\sum_{m_1}Y_{\ell,m_2}(x) Y_{\ell,m_2}(y)\cr
&=\int_{\mathbb S^d} \int_{\mathbb S^d} dx dy  Y_{\ell,M}(x) Y_{\ell,M'}(y)\left( \frac{n_{\ell;d}}{|\mathbb S^d|}\right)^2 G_{\ell;d}(\cos d(x,y))^2,
\end{split}
\end{equation}
where in the last equality we used twice the addition formula \paref{addformula} for Gegenabuer polynomials. The family $\left( \sqrt{|\mathbb S^{d-1}| n_{\ell;d}/|\mathbb S^{d}|}\,G_{\ell;d}\right)_\ell$ being orthonormal on $[-1,1]$, the following equality holds
\begin{equation}\label{polyn}
G_{\ell;d}(t)^2=\sum_{j=0}^{2\ell} \gamma_j G_{j;d}(t),\qquad t\in [-1,1],
\end{equation}
where the coefficients $\gamma_j := \gamma_j(\ell;d)$ are given by 
\begin{equation*}
\gamma_j = n_{\ell;d}\frac{|\mathbb S^{d-1}| }{|\mathbb S^{d}|} \int_{-1}^1 G_{\ell;d}(t)^2G_{j;d}(t) (\sqrt{1-t^2})^{d-2}\,dt.
\end{equation*}
Therefore, plugging \paref{polyn} into \paref{chainbella}, by the orthormality property of hyperspherical harmonics one deduces 

\begin{equation*}
\begin{split}
\sum_{m_1,m_2} {\mathcal G}^{\ell,M}_{\ell,m_1,\ell,m_2}{\mathcal G}^{\ell,M'}_{\ell,m_1,\ell,m_2}&=\int_{\mathbb S^d} \int_{\mathbb S^d} dx dy  Y_{\ell,M}(x) Y_{\ell,M'}(y)\left( \frac{n_{\ell;d}}{|\mathbb S^d|}\right)^2 \sum_{j} \gamma_j G_{j;d}(\cos d(x,y))\cr
&= \int_{\mathbb S^d} \int_{\mathbb S^d} dx dy  Y_{\ell,M}(x) Y_{\ell,M'}(y)\left( \frac{n_{\ell;d}}{|\mathbb S^d|}\right)^2 \sum_{j} \gamma_j \sum_{k} \frac{|\mathbb S^d|}{n_{j;d}} Y_{j,k}(x)Y_{j,k}(y)\cr
&=\left( \frac{n_{\ell;d}}{|\mathbb S^d|}\right)^2\sum_{j} \gamma_j \frac{|\mathbb S^d|}{n_{j;d}}\sum_{k}\underbrace{\int_{\mathbb S^d}    Y_{\ell,M}(x)   Y_{j,k}(x)\,dx}_{=\delta_\ell^j \delta_M^k} \underbrace{\int_{\mathbb S^d} Y_{\ell,M'}(y) Y_{j,k}(y)\,dy}_{=\delta_{\ell}^j \delta_{M'}^k}\cr
&=\frac{n_{\ell;d}}{|\mathbb S^d|} \gamma_\ell\, \delta_M^{M'}.
\end{split}
\end{equation*}
This concludes the proof.
\end{proof}

\section{Appendix}\label{app}

\subsection{Proof of Lemma \ref{lem chaos}}\label{appchaos}

\begin{proof} 
By \paref{defect}, one deduces that $|D_\ell|\le |\mathbb S^d|$ a.s. and hence $D_\ell\in L^2(\P)$. Recall that we can write 
$$
D_\ell = 2\int_{\mathbb S^d} 1_{(0,+\infty)}(T_\ell(x))\,dx - |\mathbb S^d|.
$$
The chaotic expansion \S \ref{defchaos} of the indicator function $1_{(0,+\infty)}$ is  given by (see e.g. \cite{Nonlin} and the references therein)
$$
1_{(0,+\infty)}(\cdot ) = \frac12 + \sum_{q\ge 0} \frac{\phi(0)H_{2q}(0)}{(2q+1)!} H_{2q+1}(\cdot ),
$$
where $\phi $ still denotes the p.d.f. of the standard Gaussian law and $(H_k)_{k\ge 0}$ the sequence of Hermite polynomials. Hence in particular, 
\begin{equation}\label{converg}
\sum_{q\ge 0} \frac{(\phi(0)H_{2q}(0))^2}{(2q+1)!} = \Phi(0)(1-\Phi(0))< +\infty,
\end{equation}
$\Phi$ still denoting the cumulative distribution function of a standard Gaussian random variable. 
Actually, it is easy to check that, for $Z\sim \mathcal N(0,1)$, $\E[1_{(0,+\infty)}(Z)] = 1/2$, whereas for $k\ge 1$
\begin{equation*}
\begin{split}
\E[1_{(0,+\infty)}(Z)H_{k}(Z)]&=\int_0^{+\infty} (-1)^k \phi^{-1}(t)\frac{d^k\phi}{dt^k}(t)\phi(t)\,dt\cr
&= (-1)^k \frac{d^{k-1}\phi}{dt^{k-1}}(t)   \Big|_0^{+\infty} = -\phi(t) H_{k-1}(t)   \Big|_0^{+\infty}  \cr
&=\phi(0) H_{k-1}(0),
\end{split}
\end{equation*}
which vanishes if $k$ is even. 
For $m\in \mathbb N$, $m>0$, let us consider the random variable
\begin{equation*}\label{ok}
\begin{split}
U_\ell^m &:= 2\int_{\mathbb S^d} \left( \frac12 + \sum_{q= 0}^m  \frac{\phi(0)H_{2q}(0)}{(2q+1)!} H_{2q+1}(T_\ell(x) ) \right)dx - |\mathbb S^d|\cr
&=\sum_{q= 1}^m  2\frac{\phi(0)H_{2q}(0)}{(2q+1)!}\int_{\mathbb S^d}  H_{2q+1}(T_\ell(x) )\,dx,
\end{split}
\end{equation*}
where the sum starts from $q=1$ since $H_1(t) = t$ and hyperspherical harmonics have zero mean over $\mathbb S^d$. Let us set moreover
$$
J_{2q+1} := 2\phi(0)H_{2q}(0). 
$$
In what follows, we shall show that the sequence of random variables $\left(U_\ell^m\right)_m$ is a Cauchy sequence in $L^2(\P)$. By the orthogonality property of chaotic projections and \paref{producthermite}, we have for $m,n \in \mathbb N$, $n,m>0$
\begin{equation*}
\begin{split}
\E[(U_\ell^m - U_\ell^{m+n})^2] = \sum_{q=m+1}^{m+n} \frac{J_{2q+1}^2}{(2q+1)!}\int_{(\mathbb S^d)^2} G_{\ell;d}(\cos d(x,y))^{2q+1}\,dxdy.
\end{split}
\end{equation*}
Now, since Gegenbauer polynomials are uniformly bounded by $1$, we have 
\begin{equation*}
\begin{split}
\E[(U_\ell^m - U_\ell^{m+n})^2]&\le |\mathbb S^d|^2\sum_{q=m+1}^{m+n} \frac{J_{2q+1}^2}{(2q+1)!};
\end{split}
\end{equation*}
hence \paref{converg} allows to conclude the proof. 
\end{proof}

\subsection{Some useful estimates}\label{valeapp}
Let us denote $H:=L^2(\mathbb S^d)$. 
\begin{lemma}\label{comevale}
There exists $C>0$ such that for integers $q,p\ge 1$, $q\le p$,
\begin{equation}\label{pesante}
\begin{split}
&\Var \left (\left \langle D  h_{\ell;2q+1,d} , -DL^{-1}  h_{\ell;2q+1,d} \right \rangle_H         \right )\le C (2q+1)^2 ((2q)!)^2 3^{4q} R_{\ell;d},\cr
&\E \left [  \left \langle D  h_{\ell;2q+1,d} , -DL^{-1}  h_{\ell;2p+1,d} \right \rangle_H ^2 \right ]
\le C (2q+1)^2(2q)! (2p)! 3^{2q+2p} R_{\ell;d},
\end{split}
\end{equation}
where 
\begin{equation*}
R_{\ell;2} := \frac{\log\ell}{\ell^{9/2}}\quad \text{ and for } d>2 \quad  R_{\ell;d}:=\frac{1}{\ell^{2d +(d-1)/2}}.
\end{equation*}
\end{lemma}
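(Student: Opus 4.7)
The plan is to reduce everything to bounds on contraction norms of the chaos kernel underlying $h_{\ell;2q+1,d}$, then to express those contractions as $4$-fold integrals of powers of Gegenbauer polynomials and bound them by delicate asymptotics.

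\emph{Setup.} By the isonormal representation \paref{eigen}--\paref{kernel} and the argument leading to Lemma \ref{lem chaos}, one has $h_{\ell;2q+1,d}=I_{2q+1}(f_\ell^{(2q+1)})$ with the symmetric kernel
\[
f_\ell^{(2q+1)}(x_1,\dots,x_{2q+1})=\int_{\mathbb S^d}\prod_{i=1}^{2q+1}f_{\ell,x}(x_i)\,dx,
\]
and analogously for $h_{\ell;2p+1,d}$. Combining \paref{marra2} with $L^{-1}I_Q(g)=-\tfrac1Q I_Q(g)$ gives $DI_Q(f)=Q\,I_{Q-1}(f)$ and $-DL^{-1}I_Q(g)=I_{Q-1}(g)$.

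\emph{Chaos expansion of the inner product.} Applying the multiplication formula in \S \ref{multiple} to $I_{2q}(f_\ell^{(2q+1)}(\cdot,y))\,I_{2p}(f_\ell^{(2p+1)}(\cdot,y))$ for each $y\in\mathbb S^d$ and integrating over $y$ yields
\[
\langle Dh_{\ell;2q+1,d},-DL^{-1}h_{\ell;2p+1,d}\rangle_H=(2q+1)\sum_{r=0}^{2q}r!\binom{2q}{r}\binom{2p}{r}\,I_{2q+2p-2r}\!\bigl(f_\ell^{(2q+1)}\widetilde\otimes_{r+1}f_\ell^{(2p+1)}\bigr).
\]
If $p\ne q$ every summand lives in a non-trivial chaos, so the expectation vanishes and second moment equals variance; if $p=q$ the $r=2q$ term contributes the scalar $\Var(h_{\ell;2q+1,d})$ and is removed when taking the variance. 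In either case, orthogonality of Wiener chaoses turns the quantity into a weighted sum of squared norms $\|f_\ell^{(2q+1)}\widetilde\otimes_{r+1}f_\ell^{(2p+1)}\|^2$.

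\emph{Expressing contractions as Gegenbauer integrals.} Using the reproducing identity $\int_{\mathbb S^d}f_{\ell,x}(z)f_{\ell,y}(z)\,dz=G_{\ell;d}(\cos d(x,y))$ (a consequence of the addition formula \paref{addformula}), a direct computation shows that, writing $G(u,v):=G_{\ell;d}(\cos d(u,v))$,
\[
\|f_\ell^{(2q+1)}\otimes_{r+1}f_\ell^{(2p+1)}\|^2=\int_{(\mathbb S^d)^4}G(x,x')^{2q-r}G(y,y')^{2p-r}G(x,y)^{r+1}G(x',y')^{r+1}\,dx\,dx'\,dy\,dy'.
\]
Passing from symmetrised to non-symmetrised contractions loses only a triangle-inequality factor (cf.~the comment below \paref{casoparticolare}), which is absorbed into the combinatorial prefactor. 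The factor $(r!)^2\binom{2q}{r}^2\binom{2p}{r}^2(2q+2p-2r)!$ is controlled uniformly in $r$ by elementary identities such as $r!\binom{2q}{r}(2q-r)!=(2q)!$ and bounds like $\binom{2q}{r}\le 2^{2q}$, producing the desired prefactors $(2q+1)^2((2q)!)^2 3^{4q}$ and $(2q+1)^2(2q)!(2p)!\,3^{2q+2p}$.

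\emph{The main obstacle.} What remains is to prove a uniform-in-$r$ bound of order $R_{\ell;d}$ on the $4$-fold integrals above. A crude application of $\|G_{\ell;d}\|_\infty\le 1$ combined with Cauchy--Schwarz only reduces matters to $\int_0^\pi|G_{\ell;d}(\cos\theta)|^{k}(\sin\theta)^{d-1}d\theta$ and gives rate $\ell^{-2(d-1)}$, which is far too weak. The strategy is rather to exploit the Hilb-type asymptotic $G_{\ell;d}(\cos\theta)\sim\widetilde J_d(\ell\theta)$ of \paref{tildej} together with the oscillatory decay of the Bessel function at infinity, splitting $(\mathbb S^d)^4$ according to the mutual geodesic distances: on the near-diagonal regions one uses the local Bessel approximation, while on the bulk one exploits cancellation, much in the spirit of \cite[Proposition~4.2]{maudom}. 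This produces the rate $\ell^{-(2d+(d-1)/2)}$ for $d>2$; for $d=2$ the quartic integral sits at the boundary of $L^p$-integrability of Legendre polynomials, which inserts the extra $\log\ell$ factor and explains the separate definition of $R_{\ell;2}$. This asymptotic analysis is the heaviest technical step and is precisely where the dimensional dichotomy in $R_{\ell;d}$ originates.
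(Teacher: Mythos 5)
Your reduction is correct and follows the same route as the paper: the multiplication formula turns $\langle Dh_{\ell;2q+1,d},-DL^{-1}h_{\ell;2p+1,d}\rangle_H$ into a sum of multiple integrals of contracted kernels, orthogonality of chaoses converts variance/second moment into weighted sums of $\Vert f_\ell^{(2q+1)}\otimes_{r}f_\ell^{(2p+1)}\Vert^2$, the reproducing property of $f_{\ell,x}$ identifies these norms with fourth-order integrals of powers of Gegenbauer polynomials, and the combinatorial prefactors are summed into $((2q)!)^2 3^{4q}$ and $(2q)!(2p)!3^{2q+2p}$. All of this matches the paper (which simply cites \cite{vale3} and \cite[Proposition 4.1]{maudom} for these steps), and your derivation of it is sound.

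The gap is in the only step that carries the analytic content of the lemma: the claim that every one of these fourth-order Gegenbauer integrals is $O(R_{\ell;d})$ \emph{uniformly in $r,q,p$}. You do not prove this; you sketch a Hilb/Bessel ``near-diagonal versus bulk'' splitting and defer to the spirit of \cite[Proposition 4.2]{maudom}. But the one configuration that such moment-and-oscillation arguments are known \emph{not} to handle is exactly the one your lemma needs: for $q=p=1$ the contractions $r=1,2$ both produce the circulant integral $I_{\ell;d}$ of \paref{circdiag}, for which the techniques of \cite[Propositions 4.2--4.3]{maudom} only give $O(\ell^{-(5d-5)/2})$, which exceeds $R_{\ell;d}=\ell^{-(5d-1)/2}$ and is precisely why the cases $d=3,4,5$ were open. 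The paper closes this case by an entirely different, algebraic mechanism: the addition formula converts $I_{\ell;d}$ into concatenated sums of the coefficients $\mathcal G^{\ell,m_3}_{\ell,m_1,\ell,m_2;d}$, and the orthogonality identity of Lemma \ref{lemCG} collapses these sums to give the exact order $I_{\ell;d}\asymp\ell^{-(3d-1)}\le R_{\ell;d}$ (see \paref{moltobello}). Without either importing the bounds of \cite{maudom} for the non-circulant contractions \emph{and} supplying a genuinely new argument for the circulant one, the estimate $\Vert f_\ell^{(2q+1)}\otimes_r f_\ell^{(2p+1)}\Vert^2=O(R_{\ell;d})$ remains unproved, so the lemma does not yet follow. (A minor additional point: your asserted ``crude'' rate $\ell^{-2(d-1)}$ understates what sup-norm plus Cauchy--Schwarz actually yields; the issue is not that the crude method is hopeless everywhere, but that it falls short by a factor $\ell^2$ on the circulant term.)
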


\begin{proof}
Recall that $h_{\ell;2q+1,d}$ can be expressed as a multiple Wiener-It\^o integral of order $q$ (see \S \ref{multiple})
\begin{equation*}
h_{\ell;2q+1,d} \mathop{=}^{\mathcal L} \int_{(\mathbb S^d)^q} g_{\ell;2q+1,d}(y_1,y_2,\dots,y_q)\,dW(y_1)dW(y_2)\dots dW(y_q)=:I_q(g_{\ell;2q+1,d}),
\end{equation*}
where the function $g_{\ell;2q+1,d}$ is given by 
\begin{equation*}
g_{\ell;2q+1,d}(y_1,y_2,\dots,y_q) := \int_{\mathbb S^d} \left( \frac{n_{\ell;d}}{|\mathbb S^d|}  \right)^{q/2} G_{\ell;d}(\cos d(x,y_1)) \cdots G_{\ell;d}(\cos d(x,y_q))\,dx.
\end{equation*}
Similar arguments as those in the proof of \cite[Lemma 6.1]{vale3} allow one to have, for integers $p,q\ge 1$, the following new estimates 
\begin{equation}\label{dafare1}
\begin{split}
&\Var \left (\left \langle D  h_{\ell;2q+1,d} , -DL^{-1}  h_{\ell;2q+1,d} \right \rangle_H         \right )\cr
&\le (2q+1)^2 \sum_{r=1}^{2q} ((r-1)!)^2 { 2q \choose r-1}^4 (2(2q+1)-2r)!\| g_{\ell;2q+1,d}  \otimes_r g_{\ell;2q+1,d}\|^2_{H^{\otimes 2(2q+1)-2r}},
\end{split}
\end{equation}
and moreover for $q\le p$ 
\begin{equation}\label{dafare}
\begin{split}
&\E \left [  \left \langle D  h_{\ell;2q+1,d} , -DL^{-1}  h_{\ell;2p+1,d} \right \rangle_H ^2 \right ] \cr
&= (2q+1)^2 \sum_{r=1}^{2q+1} ((r-1)!)^2 { 2q \choose r-1}^2 { 2p \choose r-1}^2 (2q+2p + 2-2r)!\| g_{\ell;2q+1,d} \widetilde \otimes_r g_{\ell;2p+1,d}\|^2_{H^{\otimes n}} \cr
&\le (2q+1)^2 \sum_{r=1}^{2q+1} ((r-1)!)^2 { 2q \choose r-1}^2 { 2p \choose r-1}^2 (2q+2p + 2-2r)!\| g_{\ell;2q+1,d}  \otimes_r g_{\ell;2p+1,d}\|^2_{H^{\otimes n}},
\end{split}
\end{equation}
where $n:=2q+2p+2-2r$ for notational simplicity. 
Now from \cite[Proposition 4.1]{maudom} we know the explicit formula for the norm of contractions: for $q\le p$
\begin{equation*}
\begin{split}
\| g_{\ell;2q+1,d}  \otimes_r g_{\ell;2p+1,d}\|^2_{H^{\otimes n}}
=\int_{(\mathbb S^d)^4}&G_{\ell;d}(\cos d(x_1,x_2))^r G_{\ell;d}(\cos d(x_2,x_3))^{2q+1-r} \times\cr
&\times G_{\ell;d}(\cos d(x_3,x_4))^r G_{\ell;d}(\cos d(x_4,x_1))^{2q+1-r}d\underline{x},
\end{split}
\end{equation*}
where $d\underline{x}:=dx_1dx_2dx_3dx_4$. Thanks to \cite[Proposition 4.2, Proposition 4.3]{maudom} (for $q\ge 2$) and Lemma \ref{lem} (for $q=1$) we have, as $\ell\to +\infty$,
\begin{equation}\label{contr}
\| g_{\ell;2q+1,2}  \otimes_r g_{\ell;2p+1,2}\|^2_{H^{\otimes n}} = O\left(R_{\ell;d}  \right),
\end{equation}
where $R_{\ell;2} = \log\ell / \ell^{9/2}$ and $R_{\ell;d}=1/\ell^{2d +(d-1)/2}$ for $d>2$. Note that $O$' notation is independent of $q$ and $p$. 

As stated in \cite[(6.1),(6.2)]{vale3}, the following inequalities hold
\begin{equation}\label{facile}
\begin{split}
&\sum_{r=1}^{2q} ((r-1)!)^2 { 2q \choose r-1}^4 (2(2q+1)-2r)!\le ((2q)!)^2 3^{4q},\cr
&\sum_{r=1}^{2q+1} ((r-1)!)^2 { 2q \choose r-1}^2 { 2p \choose r-1}^2 (2q+2p + 2-2r)!\le (2q)! (2p)! 3^{2q+2p}.
\end{split}
\end{equation}
Plugging \paref{facile} and \paref{contr} into \paref{dafare1} and \paref{dafare}, one infers \paref{pesante}.
\end{proof}


\end{document}